\tikzstyle{vertex}=[circle,draw=black,fill=black,inner sep=0,minimum size=3pt,text=white,font=\footnotesize]
\newtheorem{theorem}{Theorem}
\newtheorem*{conjecture*}{Conjecture}
\newtheorem{proposition}{Proposition}[section]
\newtheorem{lemma}[proposition]{Lemma}
\newtheorem{definition}[proposition]{Definition}
\theoremstyle{definition}
\newtheorem{remark}[proposition]{Remark}
\newenvironment{manualtheorem}[1]{%
  \manualtheoreminner
}{\endmanualtheoreminner}
\newcommand{\vs}{\vspace{3mm}}
\newcommand{\vsss}{\vspace{6mm}}
\newcommand{\un}{\underline}
\newcommand{\hs}{\hspace{1mm}}
\newcommand{\R}{\mathbb{R}}
\newcommand{\Z}{\mathbb{Z}}
\newcommand{\N}{\mathbb{N}}
\newcommand{\C}{\mathbb{C}}
\newcommand{\T}{\mathbb{T}}
\newcommand{\mf}{\mathfrak}
\newcommand{\ep}{\epsilon}
\newcommand{\lam}{\lambda}
\newcommand{\sub}{\subseteq}
\newcommand{\al}{\alpha}
\newcommand{\modd}[1]{\text{ mod } #1}
\newcommand{\lcm}{\text{lcm}}
\newcommand{\ol}{\overline}
\newcommand{\wt}{\widetilde}
\newcommand{\wh}{\widehat}
\title{On Sumsets Containing a Perfect Square}
\author{Zachary Chase}
\thanks{The author is partially supported by Ben Green's Simons Investigator Grant 376201 and gratefully acknowledges the support of the Simons Foundation.}
\address{Mathematical Institute, Andrew Wiles Building, Radcliffe Observatory Quarter, Woodstock Road, Oxford OX2 6GG, UK}
\email{zachary.chase@maths.ox.ac.uk}
\date{January 11, 2022}
\begin{document}

\begin{abstract}
We show $A+B$ contains a perfect square if $A,B \sub \{1,\dots,N\}$ have $|A|,|B| \ge (\frac{3}{8}+\epsilon)N$. The constant $\frac{3}{8}$ is optimal. 
\end{abstract}

\maketitle

\section{Introduction}

Let $A,B$ be subsets of the first $N$ positive integers. What are the maximum possible sizes of $A$ and $B$ if $A+B$ does not contain a perfect square? 

\vs

Let us first discuss the history of the related question of the largest size of a subset $A \sub \{1,\dots,N\}$ with $A+A$ not containing a perfect square, originally raised by Erd\H{o}s and Silverman \cite[p.~87,~107]{erdosandsilverman}. Erd\H{o}s initially conjectured that the answer is roughly $\frac{1}{3}N$, coming from $$A := \{n \le N : n \equiv 1 \modd 3\}.$$ However, Massias \cite{massias} noted that $$A := \{n \le N : n \modd 32 \in \{1,5,9,13,14,17,21,25,26,29,30\}\}$$ gives the larger size of roughly $\frac{11}{32}N$. The two mentioned sets $A$ indeed have the property that $A+A$ does not contain a perfect square, since the sumset of $\{1\} \sub \Z/3\Z$ with itself does not contain a quadratic residue (in $\Z/3\Z$), and the sumset of $\{1,5,9,13,14,17,21,25,26,29,30\} \sub \Z/32\Z$ with itself avoids quadratic residues. 

\vs

Given that these two examples come from ``lifting up" a set $A \sub \Z/q\Z$ for some $q \in \N$, and that any perfect square must be a quadratic residue mod $q$, it is natural to first solve the ``modular" version of the problem: for given $q \in \N$, what is the largest size of a set $A \sub \Z/q\Z$ such that $A+A$ does not contain a quadratic residue? 

\vs

In 1982, Lagarias, Odlyzko, and Shearer \cite{los1} showed the answer is $\frac{11}{32}q$ (which is tight if $32 \mid q$). In 1983, they released a companion paper \cite{los2} proving that if $A \sub [N]$ has $|A| \ge 0.475 N$ then $A+A$ contains a perfect square. Finally, in 2001, Khalfalah, Lodha, and Szemerédi \cite{kls} resolved the Erd\H{o}s-Silverman problem, by showing that for all $\epsilon > 0$, if $N$ is sufficiently large, then any $A \sub [N]$ with $A+A$ avoiding perfect squares must have $|A| \le (\frac{11}{32}+\epsilon)N$. 

\vs

In this paper, we solve the aformentioned ``bipartite" version of the Erd\H{o}s-Silverman question. Our result is asymptotically optimal. 

\vspace{1.5mm}

\begin{theorem}\label{3/8+epsilon}
For any $\ep > 0$, if $N$ is sufficiently large and $A,B \sub [N]$ have $|A|,|B| \ge (\frac{3}{8}+\ep)N$, then $A+B$ contains a perfect square.
\end{theorem}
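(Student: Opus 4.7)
The plan is to combine a short combinatorial lemma in $\Z/8\Z$ with the Hardy--Littlewood circle method. The driving observation is that $\tfrac{3}{8}$ is tight for purely modular reasons: since the perfect squares are exactly $\{0,1,4\}$ modulo $8$, taking $A=\{n\in[N]:n\equiv 3,6,7\pmod 8\}$ and $B=\{n\in[N]:n\equiv 0,4,7\pmod 8\}$ gives $|A|,|B|\approx\tfrac{3}{8}N$ with $A+B\subseteq\{2,3,5,6,7\}\pmod 8$, missing every perfect square. The proof shows any obstruction to $A+B$ containing a square must arise from such a mod-$8$ structure.

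\textbf{Combinatorial lemma in $\Z/8\Z$.} If $A_0,B_0\sub\Z/8\Z$ satisfy $(A_0+B_0)\cap\{0,1,4\}=\emptyset$, then $\min(|A_0|,|B_0|)\le 3$. For each $b\in\Z/8\Z$ set $F_b:=\{-b,1-b,4-b\}\pmod 8$, the preimage of the quadratic residues under translation by $b$; the hypothesis says $A_0\sub\Z/8\Z\setminus\bigcup_{b\in B_0}F_b$. A short case check shows $|F_b\cap F_{b'}|\le 2$ with equality only when $b'=b+4$, and consequently any $4$-element subset of $\Z/8\Z$ contains at most two of the $F_b$. Hence $|A_0|\ge 4$ forces $|\bigcup_{b\in B_0}F_b|\le 4$, giving $|B_0|\le 2$.

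\textbf{Circle-method estimate.} If $|A|,|B|\ge\delta N$ with $\delta>\tfrac{1}{3}$ and $(A+B)\bmod 8$ contains an element of $\{0,1,4\}$, then $A+B$ contains a perfect square, provided $N\ge N_0(\delta)$. Expressing the count of $(a,b,n)\in A\times B\times[1,\sqrt{2N}]$ with $a+b=n^2$ as $\int_0^1\hat A(\xi)\hat B(\xi)\overline{\hat S_N(\xi)}\,d\xi$ with $\hat S_N(\xi)=\sum_{n\le\sqrt{2N}}e(n^2\xi)$, the standard major/minor arc decomposition yields a main term of size $|A||B|/\sqrt N\cdot\mathfrak S(A,B)$ with singular series $\mathfrak S=\prod_p\mathfrak S_p$. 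The local factor $\mathfrak S_2$ is positive by our mod-$8$ hypothesis; for any odd prime $p$ the density $\delta>\tfrac{1}{3}$ ensures that $A$ and $B$ each occupy more than $p/3$ residues mod $p$, so by Cauchy--Davenport $(A+B)\bmod p$ covers more than $2p/3-1$ residues, which exceeds the $(p-1)/2$ non-residues for $p\ge 5$, while $p=3$ is immediate; $\mathfrak S_p>0$ then follows by Hensel lifting. The minor-arc contribution is $O(N^{5/4+o(1)})$ by Weyl's inequality for $\hat S_N$ combined with Cauchy--Schwarz on $\hat A\hat B$, dominated by the main term $\gg_\delta N^{3/2}$ for $N$ large.

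\textbf{Combining and main obstacle.} Assume for contradiction $|A|,|B|\ge(\tfrac{3}{8}+\ep)N$ and $A+B$ has no perfect square. The circle-method estimate forces $(A+B)\bmod 8\cap\{0,1,4\}=\emptyset$. Setting $A_0:=\{r\in\Z/8\Z:A\cap(r+8\Z)\ne\emptyset\}$ and $B_0$ analogously, we have $(A_0+B_0)\cap\{0,1,4\}=\emptyset$, so by the combinatorial lemma $\min(|A_0|,|B_0|)\le 3$. Say $|A_0|\le 3$; then $A\sub\bigcup_{r\in A_0}(r+8\Z)$ gives $|A|\le 3\lceil N/8\rceil\le\tfrac{3N}{8}+3<(\tfrac{3}{8}+\ep)N$ for $N\ge 3/\ep$, a contradiction. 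The main obstacle is the circle-method estimate: one must verify positivity of each local singular-series factor (the mod-$8$ piece comes from the hypothesis, the odd-prime pieces from density $>\tfrac{1}{3}$ via Cauchy--Davenport and Hensel) and control the minor arcs via Weyl's inequality for quadratic exponential sums. The combinatorial lemma is elementary and is what pins down the exact constant $\tfrac{3}{8}$.
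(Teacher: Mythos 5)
Your combinatorial lemma in $\Z/8\Z$ is correct (and the proof via the triples $F_b$ is clean), but the circle-method step is false as stated, and this is precisely where the real difficulty of the problem lies. The claim is: if $|A|,|B|\ge\delta N$ with $\delta>\tfrac13$ and $(A+B)\bmod 8$ meets $\{0,1,4\}$, then $A+B$ contains a square. Massias's set $A=\{n\le N:n\bmod 32\in\{1,5,9,13,14,17,21,25,26,29,30\}\}$, with $B=A$, is a counterexample: its density is $\tfrac{11}{32}>\tfrac13$, its mod-$8$ projection is $\{1,2,5,6\}$ so $(A+A)\bmod 8$ contains $0$ and $4$, yet $A+A$ avoids all perfect squares because $A+A$ avoids quadratic residues mod $32$. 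Your assertion that ``$\mathfrak{S}_2$ is positive by our mod-$8$ hypothesis'' is where the argument breaks: the $2$-adic factor is a limit over $2^k$-conditions for all $k$, and the mod-$8$ condition controls only $k\le 3$. More generally, a density hypothesis and a statement about the \emph{support} of $A+B$ modulo one fixed modulus cannot control the singular series, because the main term of the circle method is a weighted count and can vanish even when the support condition holds (e.g.\ $A$ mostly concentrated on bad residues plus one stray element in a good one).

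The paper avoids exactly this pitfall. Rather than isolating the modulus $8$, it proves a quantitative \emph{weighted} modular theorem (Theorem 3.1): for every modulus $q$, if $w_A,w_B:\Z/q\Z\to[0,1]$ have mean exceeding $\tfrac38+\ep$, then $\sum_t(w_A*w_B)(t)f_q(t)\ge c(\ep)q$. This handles the mod-$32$ (and all $2^k$ and composite) obstructions in one stroke, and it is the piece of the argument that genuinely requires work — reduced there, via Fourier analysis and Gauss-sum estimates, to a $48$-variable quadratic optimization. The transfer to $[N]$ is then done by approximating $1_A,1_B$ by their best mod-$Q$ weight functions on short intervals for a highly composite $Q$, a low-tech arithmetic regularity argument combined with a minor-arc bound for $\widehat{1_S}$. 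Your mod-$8$ lemma is the special case $q=8$ of Theorem 3.1 for $\{0,1\}$-valued weights, and your ``combining'' step is sound conditionally; but without a uniform-in-$q$ modular result the reduction to $q=8$ is simply not available, and the circle-method estimate you propose to fill that gap is not true.
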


\vspace{1mm}

An example achieving roughly $\frac{3}{8}N$ is $$A := \{n \le N : n \modd 8 \in \{0,1,5\}\}$$ $$B := \{n \le N : n \modd 8 \in \{2,5,6\}\},$$ which works since the $\Z/8\Z$-sumset $\{0,1,5\}+\{2,5,6\}$ avoids quadratic residues.

\vs

We prove Theorem \ref{3/8+epsilon} by first resolving the associated ``modular" version of the problem. While the methods of \cite{los1}, solving the modular problem for $A+A$, are highly graph-theoretic, our methods use Fourier analysis to reduce (in one direction) to solving some optimization problem in $48$ variables. Interestingly, the paper \cite{los1} also involved solving some optimization problems, specifically various integer programs. It is plausible our methods could solve the modular $A+A$ problem, though the number of variables in the obtained optimization problem would be significantly too large. 

\vs

We then obtain the result in the integers by basic Fourier-analytic arguments. While \cite{kls}, solving the $A+A$ problem in the integers, introduced a novel ``shifting method" and a low-level strong arithmetic regularity lemma with tower-type bounds, our Fourier arguments amount to a rather basic arithmetic regularity lemma with only singly exponential bounds. In rough terms, we approximate the characteristic function of $A \sub [N]$ (and of $B$) by its best modulo $Q$ weight function approximation on $\eta^{-1}$ intervals each of length $\eta N$, where $\eta^{-1}$ and $\log Q$ are polynomials of $\epsilon^{-1}$. Counting the number of perfect squares ``in" the convolution of these weight functions essentially reduces to the modular problem. For details, see Section \ref{integersection}.

\vspace{1.5mm}

\section{Notation}

We use the standard $[N] := \{1,\dots,N\}$ and $e(\theta) := e^{2\pi i \theta}$. Let $\frac{1}{\N} := \{\frac{1}{n} : n \ge 1\}$. Let $\T = \R/\Z$. For $f: [N] \to \C$, define $\wh{f} : \T \to \C$ by $$\wh{f}(\theta) := \sum_{n \le N} f(n)e(-n\theta).$$ For $f : \Z/q\Z \to \C$, define $\wh{f} : \Z/q\Z \to \C$ by $$\wh{f}(r) := \frac{1}{q}\sum_{x \in \Z/q\Z} f(x)e\left(-\frac{rx}{q}\right).$$ Define the weighted indicator function of the quadratic residues $f_q : \Z/q\Z \to \R$ by $$f_q(t) := |\{x \in \Z/q\Z : x^2 = t\}|.$$ For functions $f,g : \Z/q\Z \to \C$, define the convolution of $f,g$ as $$(f*g)(x) := \frac{1}{q}\sum_{a \in \Z/q\Z} f(a)g(x-a),$$ while for finitely supported functions $f,g : \Z \to \C$, we define the convolution as $$(f*g)(x) := \sum_{n \in \Z} f(n)g(x-n).$$

\vspace{1.5mm}

\section{The Modular Problem}\label{modularsection}

In this section, we prove the following, a (doubly) weighted, quantitative version of the statement that $A+B$ contains a quadratic residue if $A,B \sub \Z/q\Z$ have $|A|,|B| > \frac{3}{8}q$.

\begin{theorem}\label{modular}
For any $\epsilon > 0$ there is some $c(\epsilon) > 0$ so that for any $q \ge 1$, if $w_A,w_B : \Z/q\Z \to [0,1]$ have $\sum_{t \in \Z/q\Z} w_A(t), \sum_{t \in \Z/q\Z} w_B(t) \ge (\frac{3}{8}+\epsilon)q$, then $$\sum_{t \in \Z/q\Z} (w_A*w_B)(t)f_q(t) \ge c(\epsilon)q.$$ In fact, one can take $c(\ep) = \frac{1}{\sqrt{5}}\ep$. 
\end{theorem}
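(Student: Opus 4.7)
The plan is to apply Fourier analysis on $\Z/q\Z$. Using $\wh{w_A * w_B} = \wh{w_A}\wh{w_B}$, we have
\[
\sum_{t} (w_A * w_B)(t) f_q(t) = q \sum_{r \in \Z/q\Z} \wh{w_A}(r)\wh{w_B}(r)\wh{f_q}(-r),
\]
where $\wh{f_q}(-r) = q^{-1}\sum_x e(rx^2/q)$ is a normalized Gauss sum. The $r=0$ contribution is $q^{-1}(\sum w_A)(\sum w_B) \ge (\tfrac{3}{8}+\ep)^2 q$, but the extremal $\Z/8\Z$ example from the introduction shows the remaining frequencies can almost perfectly cancel this. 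The task is therefore to quantify by exactly how much cancellation is possible.

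Writing $d = \gcd(r,q)$ and $q' = q/d$, one has $\wh{f_q}(-r) = q'^{-1} G_{q'}(r/d)$ for the Gauss sum $G_{q'}$. Decomposing $q' = 2^k m$ with $m$ odd and using the classical CRT factorization and evaluation of Gauss sums, one obtains $|\wh{f_q}(-r)| \ll 2^{-k/2} m^{-1/2}$ when $k \ge 3$, with explicit constants in the low-$k$ cases. Combining this with Parseval's bound $\sum_r |\wh{w_A}(r)|^2 \le 1$ (valid since $w_A \in [0,1]$) and Cauchy--Schwarz, the contribution from frequencies with $q' \nmid q_0$ is $o_{\ep}(q)$, provided $q_0 = q_0(\ep)$ is chosen to include all sufficiently small prime powers as divisors (with $\log q_0$ polynomial in $\ep^{-1}$).

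The remaining frequencies all have $q' \mid q_0$, and each corresponding $\wh{w_A}(r)$ depends on $w_A$ only through its averages on residue classes modulo $q_0$. Replacing $w_A, w_B$ by these averages $\wt{w_A}, \wt{w_B} : \Z/q_0\Z \to [0,1]$ (which preserves the density lower bound) reduces the theorem to the analogous statement on the fixed small modulus $q_0$. This is a finite optimization problem in roughly $2q_0$ variables (48, according to the introduction).

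The main obstacle is this finite optimization. The feasible region is a convex polytope (the cube $[0,1]^{2q_0}$ intersected with two linear density constraints) and the objective is bilinear, so the minimum is attained at an extreme point. The $\Z/8\Z$ construction is the natural near-extremizer at $\ep = 0$, and the constant $\ep/\sqrt 5$ presumably emerges from a careful second-order analysis about it, quantifying the gain from raising either density by $\ep$. Executing this case analysis rigorously---enumerating the relevant vertex configurations of $(\wt{w_A}, \wt{w_B})$ and verifying the inequality on each, using the explicit Gauss sum values of $\wh{f_{q_0}}$---is the delicate final step, and the source of the specific constant $1/\sqrt{5}$.
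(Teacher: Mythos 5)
Your Fourier setup, the classification of frequencies by whether they are ``trivial'' (namely $q' \mid q_0$) or not, and the use of Gauss-sum estimates are all in the right spirit, but there is a genuine gap in how you control the contribution of the non-trivial frequencies, and this is where the paper's main idea lives. You bound the non-trivial contribution by Parseval ($\sum_r |\wh{w_A}(r)|^2 \le 1$) and Cauchy--Schwarz, obtaining a bound of order $q/\sqrt{p_{\min}}$ where $p_{\min}$ is the smallest prime power not dividing $q_0$; to make this $o_\ep(q)$ you are therefore forced to take $q_0$ growing polynomially in $\ep^{-1}$. But then the ``finite optimization'' you reduce to is over $2q_0$ variables with $q_0 \to \infty$ as $\ep \to 0$ --- this is \emph{not} the fixed $48$-variable problem you simultaneously invoke (that corresponds to $q_0 = 24$), and it is in fact circular: proving a uniform-in-$q_0$ lower bound for this infinite family of optimizations is precisely the content of the theorem you are trying to prove. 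Conversely, if you instead fix $q_0 = 24$, the non-trivial error is $\Theta(q)$, not $o_\ep(q)$, and the plan as written fails.

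The paper escapes this tension with a sharper error bound that is the crucial missing step. Rather than using the crude Parseval bound, one computes $\sum_{q \nmid 24m} |\wh{w_A}(m)|^2 = \sum_m |\wh{w_A}(m)|^2 - \sum_{q \mid 24m} |\wh{w_A}(m)|^2 \le \frac{1}{24}\sum_k \bigl(a(k) - a(k)^2\bigr)$, where $a : \Z/24\Z \to [0,1]$ is the mod-$24$ average of $w_A$; this quantity \emph{vanishes} when $a$ is $\{0,1\}$-valued. Combined with the Gauss-sum bound $|\wh{f_q}(-m)| \le 1/\sqrt 5$ for $q \nmid 24m$ (this is why the modulus is $24$ rather than $8$: mod $8$ would only give $1/\sqrt 3$, which is too weak), the non-trivial error is bounded by $\frac{1}{\sqrt 5}\sqrt{\sum_k (a(k)-a(k)^2)}\sqrt{\sum_k (b(k)-b(k)^2)}$, now expressed in the \emph{same} $48$ variables $a,b$ as the main term. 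The theorem then reduces to showing that $\sum_t (a*b)(t)f_{24}(t)$ dominates this error plus $\ep/\sqrt 5$ for all admissible $a,b$ --- a single, fixed $48$-variable inequality (Proposition 3.6) rather than an infinite family --- and that optimization is genuinely non-trivial (the paper gives a clever proof reducing to a norm inequality and a manageable computer check). Your write-up replaces this coupled main-term/error-term optimization with a decoupled bound that is either too weak (for fixed $q_0$) or circular (for growing $q_0$).
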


Our approach is Fourier-analytic. We start by noting the Fourier representation of this weighted count of quadratic residues ``in" the convolution of $w_A$ and $w_B$. 

\begin{lemma}\label{fourierrepresentation}
For any $w_A,w_B : \Z/q\Z \to \R$, we have $$\frac{1}{q}\sum_{t \in \Z/q\Z} (w_A*w_B)(t)f_q(t) = \sum_{m \in \Z/q\Z} \wh{w_A}(m)\wh{w_B}(m)\wh{f_q}(-m).$$
\end{lemma}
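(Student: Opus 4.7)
The identity is a standard convolution-plus-Parseval calculation, and the plan is routine; there is no real obstacle, only bookkeeping with the author's normalization of $\wh{\cdot}$ and of convolution.

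First I would verify the convolution identity $\wh{w_A \ast w_B}(m) = \wh{w_A}(m)\,\wh{w_B}(m)$ in the author's normalization. Starting from the definition and unwinding,
\[
\wh{w_A \ast w_B}(m) = \frac{1}{q}\sum_{x}\Bigl(\frac{1}{q}\sum_{a} w_A(a)\,w_B(x-a)\Bigr) e\!\left(-\tfrac{mx}{q}\right)
= \frac{1}{q^2}\sum_{a}w_A(a)e\!\left(-\tfrac{ma}{q}\right)\sum_{y}w_B(y)e\!\left(-\tfrac{my}{q}\right),
\]
after the change of variable $y=x-a$, which gives $\wh{w_A}(m)\wh{w_B}(m)$. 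The extra factor of $\tfrac{1}{q}$ in the author's convention for $\ast$ is precisely what makes this hold without any stray constants.

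Next I would apply Plancherel in the form $\sum_t F(t)G(t) = q\sum_m \wh{F}(m)\,\overline{\wh{G}(m)}$ for real-valued $F,G:\Z/q\Z\to\R$. This follows from Fourier inversion $F(t)=\sum_m \wh{F}(m)e(mt/q)$ together with orthogonality $\tfrac{1}{q}\sum_t e((m-m')t/q) = \mathbbm{1}[m=m']$; I would include that one-line derivation for completeness. Taking $F=w_A\ast w_B$ and $G=f_q$, both real, yields
\[
\sum_{t\in\Z/q\Z}(w_A\ast w_B)(t)\,f_q(t) \;=\; q\sum_{m\in\Z/q\Z}\wh{w_A}(m)\,\wh{w_B}(m)\,\overline{\wh{f_q}(m)}.
\]

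Finally, since $f_q$ takes values in $\R$, the identity $\overline{\wh{f_q}(m)} = \wh{f_q}(-m)$ holds directly from the definition (conjugating $e(-mx/q)$ turns it into $e(mx/q) = e(-(-m)x/q)$). Substituting and dividing by $q$ gives the claimed formula. The only thing worth double-checking is that all normalizations line up: $\wh{\cdot}$ carries a $\tfrac{1}{q}$, convolution carries a $\tfrac{1}{q}$, and Plancherel contributes a $q$, and these three factors combine to leave the stated identity with no extra constant.
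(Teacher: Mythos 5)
Your proof is correct and, at bottom, uses the same orthogonality-of-characters calculation as the paper; the paper simply expands the right-hand side as a triple sum and interchanges summations in one go, while you package the identical manipulations into the convolution theorem, Plancherel, and the conjugation symmetry $\overline{\wh{f_q}(m)} = \wh{f_q}(-m)$. Either presentation is fine and the normalizations all check out.
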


\begin{proof}
The right hand side is, by definition, equal to $$\sum_{m \in \Z/q\Z} \frac{1}{q^3}\sum_{x,y,z \in \Z/q\Z} w_A(x)w_B(y)f_q(z)e\left(\frac{m(z-x-y)}{q}\right).$$ Interchanging summations and using the orthogonality condition $$\vspace{-2mm} \sum_{m \in \Z/q\Z} e\left(\frac{mr}{q}\right) = \begin{cases} q & \text{ if } r \equiv 0 \modd q \\ 0 & \text{ if } r \not \equiv 0 \modd q \end{cases}$$ finishes the proof.
\end{proof}

\vspace{2mm}

\begin{remark}\label{mod8partremark}
Let us take a moment to motivate the arguments to come. Suppose for now $q$ is divisible by $8$. We (a posteriori) expect $\sum_{t \in \Z/q\Z} (w_A*w_B)(t)f_q(t)$ to be minimized by weights $w_A,w_B$ that are ``lift-ups" of weights $\ol{w}_A,\ol{w}_B : \Z/8\Z \to [0,1]$ in the sense\footnote{Note ``\text{mod} $8$" makes sense since $8 \mid q$.} $w_A(t) = \ol{w}_A(t \modd 8)$ and $w_B(t) = \ol{w}_B(t \modd 8)$. If $w_A$ and $w_B$ were indeed of this form, then, as one may easily check, we would have $\wh{w}_A(m),\wh{w}_B(m) = 0$ for each $m \in \Z/q\Z$ with $\frac{q}{\gcd(q,m)} \nmid 8$. Therefore, in our setting (in which $w_A,w_B$ might not be exactly of that form), it's natural to separate\footnote{Note that $\frac{q}{\gcd(q,m)} \nmid 8$ is equivalent to $q \nmid 8m$.}, $$\sum_{m \in \Z/q\Z} \wh{w_A}(m)\wh{w_B}(m)\wh{f_q}(-m) = \sum_{q \mid 8m} \wh{w_A}(m)\wh{w_B}(m)\wh{f_q}(-m) + \sum_{q \nmid 8m} \wh{w_A}(m)\wh{w_B}(m)\wh{f_q}(-m).$$ The latter term we shall upper-bound in magnitude, using that $\wh{f}_q(-m)$ is small for all $m$ with $q \nmid 8m$ (this follows from quadratic Gauss sum bounds). And the first term actually turns out to be just the weighted count of mod $8$ quadratic residues in the weighted sumset of the mod $8$ projections of the weight functions $w_A,w_B$. 
\end{remark}

\vspace{1.5mm}

For technical reasons, we work mod $24$ instead of mod $8$.    

\begin{lemma}\label{mod8part}
Let $q \in \N$ be a multiple of $24$. Let $w_A,w_B : \Z/q\Z \to [0,1]$ be two (weight) functions, and let $a,b : \Z/24\Z \to [0,1]$ be the mod $24$-projections of $w_A,w_B$: $$a(k) := \frac{1}{q/24}\sum_{\substack{x \in \Z/q\Z \\ x \equiv k {\normalfont \hspace{0.8mm}  \text{mod} \hspace{0.8mm}} 24}} w_A(x)$$ $$b(k) := \frac{1}{q/24}\sum_{\substack{x \in \Z/q\Z \\ x \equiv k {\normalfont \hspace{0.8mm}  \text{mod} \hspace{0.8mm}} 24}} w_B(x).$$ Then one has $$\sum_{\substack{m \in \Z/q\Z \\ q \mid 24m}} \wh{w_A}(m)\wh{w_B}(m)\wh{f_q}(-m) = \frac{1}{24}\sum_{t \in \Z/24\Z} (a*b)(t)f_{24}(t).$$ 
\end{lemma}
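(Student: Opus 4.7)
The plan is to reparametrize the sum on the left, identify the Fourier coefficients of $w_A,w_B,f_q$ at the relevant frequencies with those of $a,b,f_{24}$ on $\Z/24\Z$, and then apply Lemma \ref{fourierrepresentation} at modulus $24$.

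Since $24 \mid q$, the condition $q \mid 24m$ is equivalent to $m = \tfrac{kq}{24}$ for some $k \in \{0,1,\dots,23\}$, so I would reindex the left-hand side by $k \in \Z/24\Z$. For such $m$, I would compute $\wh{w_A}(m)$ by grouping $x \in \Z/q\Z$ according to its residue $j = x \bmod 24$: the phase $e(-mx/q) = e(-kx/24)$ depends only on $j$, so the inner sum over each residue class (of size $q/24$) collapses via the definition of $a$ to $\tfrac{q}{24}\,a(j)$, giving
$$\wh{w_A}(m) = \tfrac{1}{24}\sum_{j \in \Z/24\Z} a(j)\,e(-kj/24) = \wh{a}(k),$$
and identically $\wh{w_B}(m) = \wh{b}(k)$.

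For $\wh{f_q}(-m)$, I would use the definition of $f_q$ to rewrite it as a Gauss-type sum $\wh{f_q}(-m) = \tfrac{1}{q}\sum_{x \in \Z/q\Z} e(mx^2/q) = \tfrac{1}{q}\sum_{x \in \Z/q\Z} e(kx^2/24)$. Parametrizing $x = 24u + y$ with $u \in \Z/(q/24)\Z$ and $y \in \Z/24\Z$, the ``cross'' and ``diagonal'' contributions $k(576u^2 + 48uy)/24 = k(24u^2 + 2uy)$ are integers and therefore disappear from the exponential, leaving $e(kx^2/24) = e(ky^2/24)$. Summing over $u$ contributes a factor of $q/24$, and one reads off $\wh{f_q}(-m) = \tfrac{1}{24}\sum_{y \in \Z/24\Z} e(ky^2/24) = \wh{f_{24}}(-k)$.

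Combining the three identities, the left-hand side becomes $\sum_{k \in \Z/24\Z} \wh{a}(k)\wh{b}(k)\wh{f_{24}}(-k)$, which by Lemma \ref{fourierrepresentation} at modulus $24$ is precisely $\tfrac{1}{24}\sum_{t \in \Z/24\Z} (a*b)(t)\,f_{24}(t)$. I do not anticipate any real obstacle: the lemma is essentially a bookkeeping statement that, when $24 \mid q$, the ``low'' frequencies $m = kq/24$ see only the mod-$24$ projections of $w_A$, $w_B$, and of the squaring map $x \mapsto x^2$ on $\Z/q\Z$. The one place to be careful is the Gauss-sum computation, but the $24 \mid q$ hypothesis makes the cross term $2kuy$ integer and hence irrelevant.
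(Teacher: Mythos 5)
Your proof is correct, and it is organized somewhat differently from the paper's. The paper expands the left-hand side into the full triple sum $\frac{1}{q^3}\sum_{x,y,z} w_A(x)w_B(y)f_q(z)e(\cdot)$, applies orthogonality over the $24$ values of $r$ to pass to the constraint $x+y\equiv z\ (\mathrm{mod}\ 24)$, and then regroups by residue classes, computing $\sum_{z\equiv i+j} f_q(z) = \frac{q}{24}f_{24}(i+j)$ directly in physical space. You instead evaluate each Fourier coefficient separately, proving the three clean identities $\wh{w_A}(kq/24)=\wh{a}(k)$, $\wh{w_B}(kq/24)=\wh{b}(k)$, and $\wh{f_q}(-kq/24)=\wh{f_{24}}(-k)$, and then close the loop by invoking Lemma \ref{fourierrepresentation} again at modulus $24$. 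Both routes rest on the same mechanism (grouping by residue mod $24$ plus $24\mid q$), but yours makes the structural content more transparent: the frequencies $m=kq/24$ literally see only the mod-$24$ projections, coefficient by coefficient. Your reuse of Lemma \ref{fourierrepresentation} at the smaller modulus is a nice modularization; the paper's one-shot calculation is slightly more self-contained but less conceptual. I checked your Gauss-sum step in detail — the parametrization $x = 24u+y$, the vanishing of the $24u^2$ and $2uy$ terms from the phase, and the factor $q/24$ from summing over $u$ — and it is all sound.
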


\begin{proof}
Noting $q \mid 24m$ if and only if $m = \frac{rq}{24}$, we may write the LHS as $$\sum_{r=0}^{23} \frac{1}{q^3}\sum_{x,y,z \in \Z/q\Z} w_A(x)w_B(y)f_q(z)e\left(\frac{rq}{24}\hs\frac{z-x-y}{q}\right),$$ which by orthogonality (mod $24$) is equal to $$\frac{24}{q^3}\sum_{\substack{x,y,z \in \Z/q\Z \\ x+y \equiv z \modd 24}} w_A(x)w_B(y)f_q(z).$$ Splitting into cases mod $24$, we may write the above as \begin{equation}\label{pota} \frac{24}{q^3}\sum_{i,j \in \Z/24\Z} \left(\sum_{\substack{x \in \Z/q\Z \\ x \equiv i \modd 24}} w_A(x)\right)\left(\sum_{\substack{y \in \Z/q\Z \\ y \equiv j \modd 24}} w_B(y)\right)\left(\sum_{\substack{z \in \Z/q\Z \\ z \equiv i+j \modd 24}} f_q(z)\right).\end{equation} Noting $$\sum_{\substack{z \in \Z/q\Z \\ z \equiv i+j \modd 24}} f_q(z) = \sum_{\substack{z \in \Z/q\Z \\ z \equiv i+j \modd 24}} \sum_{v \in \Z/q\Z} 1_{v^2 \equiv z \modd q} = \sum_{v \in \Z/q\Z} 1_{v^2 \equiv i+j \modd 24} = \frac{q}{24}f_{24}(i+j),$$ and using the definitions of $a,b$, we may write \eqref{pota} as $$\frac{1}{24^2}\sum_{i,j \in \Z/24\Z} a(i)b(j)f_{24}(i+j) = \frac{1}{24}\sum_{t \in \Z/24\Z} (a*b)(t)f_{24}(t),$$ as desired.
\end{proof}

\vspace{1.5mm}

We now go on to handle the other Fourier term, $\sum_{q \hspace{0.5mm} \nmid \hspace{0.5mm} 24m} \wh{w}_A(m)\wh{w}_B(m)\wh{f}_q(-m)$. 

\vspace{1.5mm}

\begin{lemma}\label{gausssumbound}
Let $q \in \N$ be a multiple of $24$. Then for any $m \in \Z$ with $q \hspace{0.5mm} \nmid \hspace{0.5mm} 24m$, one has $$\left|\wh{f}_q(-m)\right| \le \frac{1}{\sqrt{5}}.$$ 
\end{lemma}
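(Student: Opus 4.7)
The plan is to recognize $\wh{f_q}(-m)$ as a normalized quadratic Gauss sum and then invoke the classical evaluation. Unfolding the definitions, since $f_q(t) = \#\{x \in \Z/q\Z : x^2 = t\}$, I would first rewrite
$$\wh{f_q}(-m) = \frac{1}{q}\sum_{t \in \Z/q\Z} f_q(t)\,e(mt/q) = \frac{1}{q}\sum_{x \in \Z/q\Z} e(mx^2/q),$$
so the problem reduces to bounding $|G(m,q)|/q$, where $G(m,q) := \sum_{x \in \Z/q\Z} e(mx^2/q)$.

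Next I would pass to the primitive case. Setting $d := \gcd(m,q)$ and writing $m = dm'$, $q = dq'$ with $\gcd(m',q') = 1$, the summand $e(mx^2/q) = e(m'x^2/q')$ depends only on $x \pmod{q'}$, so grouping by residues mod $q'$ yields $G(m,q) = d\,G(m',q')$, and hence $|\wh{f_q}(-m)| = |G(m',q')|/q'$. The classical evaluation of primitive quadratic Gauss sums then gives
$$|G(m',q')| = \begin{cases} \sqrt{q'} & q'\text{ odd}, \\ 0 & q' \equiv 2 \pmod 4, \\ \sqrt{2q'} & q' \equiv 0 \pmod 4, \end{cases}$$
so $|\wh{f_q}(-m)|$ equals $1/\sqrt{q'}$, $0$, or $\sqrt{2/q'}$ in the three respective cases.

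Finally, I would translate the hypothesis and finish by a short case check. The assumption $q \nmid 24m$ is equivalent (on dividing through by $d$ and using $\gcd(m',q')=1$) to $q' \nmid 24$. If $q'$ is odd and does not divide $24$, then $q' \notin \{1,3\}$, so $q' \ge 5$ and $1/\sqrt{q'} \le 1/\sqrt{5}$. The case $q' \equiv 2 \pmod 4$ is trivial, as the bound is $0$. If $q' \equiv 0 \pmod 4$ and $q' \nmid 24$, then $q' \notin \{4,8,12,24\}$, so $q'$ is a multiple of $4$ that must be at least $16$, whence $\sqrt{2/q'} \le 1/\sqrt{8} < 1/\sqrt{5}$. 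There is no substantial obstacle here; the only noteworthy feature is that $q' = 5$ saturates the bound, which is precisely why the constant $\tfrac{1}{\sqrt{5}}$ (and in turn $\tfrac{3}{8}$ in Theorem \ref{modular}) is the sharp one.
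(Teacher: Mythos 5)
Your proof is correct and takes essentially the same approach as the paper: rewrite $\wh{f_q}(-m)$ as a normalized Gauss sum, reduce to the primitive case via $g=\gcd(m,q)$, invoke the classical evaluation (split by $q/g$ odd, $\equiv 2$, or $\equiv 0 \pmod 4$), and then translate $q\nmid 24m$ into $q/g\nmid 24$ to force $q/g\ge 5$ (resp.\ $\ge 16$ in the $4\mid q/g$ case). The paper states the Gauss sum fact as an inequality and you state it as an equality, but this is cosmetic.
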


\begin{proof}
By definition, $$\wh{f_q}(-m) = \frac{1}{q}\sum_{t \in \Z/q\Z} \left(\sum_{x \in \Z/q\Z} 1_{x^2 \equiv t}\right) e(\frac{mt}{q}) = \frac{1}{q}\sum_{x \in \Z/q\Z} e\left(\frac{mx^2}{q}\right) = \frac{1}{q/g}\sum_{x \in \Z/\frac{q}{g}\Z} e\left(\frac{\frac{m}{g}x^2}{q/g}\right),$$ where $g := \gcd(m,q)$. Thus, by standard quadratic Gauss sum estimates (e.g., \cite{irelandrosen}), $$\left|\wh{f_q}(-m)\right| \le \begin{cases} \sqrt{\frac{1}{q/g}} & \text{if } q/g \in \{1,3\} \modd 4 \\ \sqrt{\frac{2}{q/g}} & \text{if } q/g \equiv 0 \modd 4 \\ 0 & \text{if } q/g \equiv 2 \modd 4. \end{cases}$$ Now, $q \hspace{0.5mm} \nmid \hspace{0.5mm} 24m$ implies $\frac{q}{g} \nmid 24$. This implies, firstly, that $\frac{q}{g} \ge 5$, giving $\sqrt{\frac{1}{q/g}} \le \frac{1}{\sqrt{5}}$, and, secondly, that if $\frac{q}{g} \equiv 0 \modd 4$, then $\frac{q}{g} \ge 16$, giving $\sqrt{\frac{2}{q/g}} \le \frac{1}{\sqrt{8}} \le \frac{1}{\sqrt{5}}$.
\end{proof}

\vspace{1.5mm}

\begin{lemma}\label{linfinityandl2}
Let $q \in \N$ be a multiple of $24$. Let $w_A,w_B : \Z/q\Z \to [0,1]$ be two (weight) functions, and let $a,b : \Z/24\Z \to [0,1]$ be the projections of $w_A,w_B$ mod $24$ as in Lemma \ref{mod8part}. Then, $$\left|\sum_{\substack{m \in \Z/q\Z \\ q \hspace{0.5mm} \nmid \hspace{0.5mm} 24m}} \wh{w_A}(m)\wh{w_B}(m)\wh{f_q}(-m)\right| \le \frac{1}{24\sqrt{5}}\sqrt{\sum_{k \in \Z/24\Z} \left(a(k)-a(k)^2\right)}\sqrt{\sum_{k \in \Z/24\Z} \left(b(k)-b(k)^2\right)}.$$ 
\end{lemma}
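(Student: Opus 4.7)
The plan is to combine the pointwise bound $|\wh{f_q}(-m)| \le 1/\sqrt{5}$ from Lemma \ref{gausssumbound}, which holds for every $m$ with $q \hspace{0.5mm} \nmid \hspace{0.5mm} 24m$, with Cauchy--Schwarz and Parseval. First I would pull the factor $1/\sqrt{5}$ out of the sum and apply Cauchy--Schwarz in $m$ to separate the $A$ and $B$ contributions; this reduces the lemma to showing
$$S_X := \sum_{\substack{m \in \Z/q\Z \\ q \hspace{0.5mm} \nmid \hspace{0.5mm} 24m}} |\wh{w_X}(m)|^2 \le \frac{1}{24}\sum_{k \in \Z/24\Z} \bigl(c_X(k) - c_X(k)^2\bigr)$$
for $X \in \{A,B\}$, where $c_A := a$ and $c_B := b$ are the mod-$24$ projections from Lemma \ref{mod8part}. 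The target constant $\frac{1}{24\sqrt{5}}$ then matches $\frac{1}{\sqrt{5}} \cdot \sqrt{\tfrac{1}{24}} \cdot \sqrt{\tfrac{1}{24}}$.

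The core step is to identify $S_A$ as the squared $\ell^2$-norm of the ``mod-$24$ mean-zero part'' of $w_A$. Writing $w_A^{(24)}(x) := a(x \modd{24})$ and decomposing $w_A = w_A^{(24)} + w^\perp$, I would check that (i) any function of the form $g(x \modd{24})$ has Fourier support contained in $\{m : q \mid 24m\}$ --- essentially the orthogonality calculation already carried out inside the proof of Lemma \ref{mod8part} --- and (ii) $w^\perp$ has mean zero on every mod-$24$ coset, so by the same character calculation its Fourier coefficients vanish on that set. These two statements give an orthogonal decomposition, and Parseval then yields
$$S_A \;=\; \sum_{m \in \Z/q\Z} |\wh{w^\perp}(m)|^2 \;=\; \frac{1}{q}\sum_{x \in \Z/q\Z} \bigl(w_A(x) - a(x \modd{24})\bigr)^2.$$

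Finally, I would expand the square; using the definition of $a$, both the cross term and the $a(x \modd{24})^2$ term collapse to $\frac{1}{24}\sum_k a(k)^2$, giving $S_A = \frac{1}{q}\sum_x w_A(x)^2 - \frac{1}{24}\sum_k a(k)^2$. Since $w_A$ takes values in $[0,1]$, $w_A(x)^2 \le w_A(x)$, so $\frac{1}{q}\sum_x w_A(x)^2 \le \frac{1}{q}\sum_x w_A(x) = \frac{1}{24}\sum_k a(k)$, completing the bound on $S_A$. The analogous estimate for $S_B$ then yields the lemma. The only step needing real care is the Fourier-support characterization in the middle paragraph; it uses nothing beyond the character orthogonality between $\Z/q\Z$ and its quotient $\Z/24\Z$ already exploited in Lemma \ref{mod8part}.
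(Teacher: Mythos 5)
Your proof is correct and follows the same approach as the paper: Cauchy--Schwarz with the Gauss-sum bound, then bounding $\sum_{q \nmid 24m}|\wh{w_X}(m)|^2$ by $\frac{1}{24}\sum_k(c_X(k)-c_X(k)^2)$ using Parseval and $w_X^2\le w_X$. The only cosmetic difference is that you phrase the middle step as an orthogonal decomposition $w_A = w_A^{(24)}+w^\perp$ with Parseval applied to $w^\perp$, whereas the paper directly computes $\sum_{q\mid 24m}|\wh{w_A}(m)|^2$ and $\sum_{m}|\wh{w_A}(m)|^2$ and subtracts; these are the same character-orthogonality calculation.
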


\begin{proof}
By Lemma \ref{gausssumbound} and Cauchy-Schwarz, we have \begin{align*}\left|\sum_{\substack{m \in \Z/q\Z \\ q \hspace{0.5mm} \nmid \hspace{0.5mm} 24m}} \wh{w_A}(m)\wh{w_B}(m)\wh{f_q}(-m)\right| &\le \left(\sup_{\substack{m \in \Z/q\Z \\ q \hspace{0.5mm} \nmid \hspace{0.5mm} 24m}} |\wh{f_q}(-m)|\right)\left(\sum_{\substack{m \in \Z/q\Z \\ q \hspace{0.5mm} \nmid \hspace{0.5mm} 24m}} |\wh{w_A}(m)|\hs|\wh{w_B}(m)|\right) \\ &\le \frac{1}{\sqrt{5}}\sqrt{\sum_{\substack{m \in \Z/q\Z \\ q \hspace{0.5mm} \nmid \hspace{0.5mm} 24m}} |\wh{w_A}(m)|^2}\sqrt{\sum_{\substack{m \in \Z/q\Z \\ q \hspace{0.5mm} \nmid \hspace{0.5mm} 24m}} |\wh{w_B}(m)|^2}.\end{align*} The following two (in)equalities (and their analogues for $B$) finish the proof: $$\hspace{-4mm} \sum_{\substack{m \in \Z/q\Z \\ q \mid 24m}} |\wh{w_A}(m)|^2 = \sum_{r=0}^{23} \frac{1}{q^2}\sum_{x,y \in \Z/q\Z} w_A(x)w_A(y) e\left(\frac{r(x-y)}{24}\right)$$ $$ \hspace{42mm} = \frac{24}{q^2} \sum_{i \in \Z/24\Z} \left(\sum_{\substack{x \in \Z/q\Z \\ x \equiv i \modd 24}} w_A(x)\right)^2 = \frac{1}{24}\sum_{k \in \Z/24\Z} a(k)^2.$$ $$\hspace{3.5mm} \sum_{m \in \Z/q\Z} |\wh{w_A}(m)|^2 = \sum_{m \in \Z/q\Z} \frac{1}{q^2} \sum_{x,y \in \Z/q\Z} w_A(x)w_A(y)e\left(\frac{m(x-y)}{q}\right)$$ $$\hspace{45.5mm} = \frac{1}{q}\sum_{x \in \Z/q\Z} w_A(x)^2 \le \frac{1}{q}\sum_{x \in \Z/q\Z} w_A(x) = \frac{1}{24}\sum_{k \in \Z/24\Z} a(k).$$
\end{proof}

Combining Lemmas \ref{fourierrepresentation}, \ref{mod8part}, and \ref{linfinityandl2} (and multiplying through by $24$) yields \begin{equation}\label{key} \hspace{-45mm} \frac{24}{q}\sum_{t \in \Z/q\Z} (w_A*w_B)(t)f_q(t) \ge \sum_{t \in \Z/24\Z} (a*b)(t)f_{24}(t)\end{equation} $$\hspace{60mm} -\frac{1}{\sqrt{5}}\sqrt{\sum_{k \in \Z/24\Z} \left(a(k)-a(k)^2\right)}\sqrt{\sum_{k \in \Z/24\Z} \left(b(k)-b(k)^2\right)}.$$ Note that $a(k) \in [0,1]$ for each $k$ and that $$\sum_{k \in \Z/24\Z} a(k) = 24\cdot\frac{1}{q}\sum_{x \in \Z/q\Z} w_A(x),$$ implying $\sum_{k \in \Z/24\Z} a(k) \ge 9+24\ep$ if $\sum_{x \in \Z/q\Z} w_A(x) \ge (\frac{3}{8}+\ep)q$. We prove the following proposition in Section \ref{optimizationsection}. We assume it to be true for the rest of this section. In it, we use the notation $a(i) := a_i, b(i) := b_i$. We emphasize that it is ``merely" a (quadratic) optimization problem in $48$ variables.

\begin{proposition}\label{optimization0}
For any $\ep > 0$, there is some $c'(\ep) > 0$ so that the following holds. For all $a_0,\dots,a_{23},b_0,\dots,b_{23} \in [0,1]$ with $\sum_{i=0}^{23} a_i \ge 9+\ep, \sum_{i=0}^{23} b_i \ge 9+\ep$, one has $$\sum_{t \in \Z/24\Z} (a*b)(t)f_{24}(t) \ge c'(\ep)+\frac{1}{\sqrt{5}}\sqrt{\sum_i a_i-\sum_i a_i^2}\sqrt{\sum_i b_i-\sum_i b_i^2}.$$ In fact, one can take $c'(\ep) = \frac{1}{\sqrt{5}}\ep$.  
\end{proposition}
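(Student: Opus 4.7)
The plan is to treat this as a concrete finite-dimensional quadratic optimization problem. Write $F(a,b) := \sum_t (a*b)(t) f_{24}(t)$ and $V(a) := \sum_i a_i - \sum_i a_i^2$ (and $V(b)$ analogously); the target is the bound $F(a,b) - \frac{1}{\sqrt 5}\sqrt{V(a)V(b)} \ge \frac{\epsilon}{\sqrt 5}$ on the compact convex set $K := \{(a,b) \in [0,1]^{48} : \sum_i a_i, \sum_j b_j \ge 9+\epsilon\}$. The expected tight configuration at $\epsilon=0$ is $a,b \in \{0,1\}^{24}$ given by the $\Z/24\Z$ lifts of the mod-$8$ indicators of $\{0,1,5\}$ and $\{2,5,6\}$: here $(\mathrm{supp}\, a + \mathrm{supp}\, b) \bmod 8 \subseteq \{2,3,5,6,7\}$ contains no mod-$8$ quadratic residue, so $f_{24}$ vanishes on the sumset and $F(a,b) = V(a) = V(b) = 0$, matching equality in the claim.

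The main step is a Lagrangian / KKT analysis. Fix $b$ and set $L_b(a) := F(a,b) - \frac{1}{\sqrt 5}\sqrt{V(b)}\sqrt{V(a)}$. Since $V$ is concave in $a$ and $\sqrt{\cdot}$ is concave and increasing, $\sqrt{V(a)}$ is concave in $a$; hence $L_b$ is a convex function of $a$ (linear minus concave). With $g_i := \frac{1}{24}\sum_j b_j f_{24}(i+j)$, the KKT conditions at an extremizer with $\sum_i a_i = 9+\epsilon$ force each $a_i \in (0,1)$ to satisfy $a_i = \tfrac{1}{2} - \kappa(g_i - \mu_a)$ for constants $\kappa>0$ and $\mu_a$ determined by $b$. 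Since $f_{24}$ is supported on the six residues $\{0,1,4,9,12,16\}$ with values in $\{2,4,8\}$, the function $g_i$ takes only a small number of distinct values as $i$ varies, so the interior $a_i$'s are restricted to a handful of levels; a symmetric reduction in $b$ collapses the extremizer search to finitely many combinatorial patterns specified by $(\{i:a_i=1\}, \{j:b_j=1\})$ together with a bounded number of interior values.

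The main obstacle is the case analysis that remains. To keep it manageable, I would exploit translation invariance on $\Z/24\Z$ and the CRT decomposition $\Z/24\Z \cong \Z/8\Z \times \Z/3\Z$ together with the factorization $f_{24}(s,r) = f_8(s) f_3(r)$, which collapse many cases into equivalence classes. A warm-up check supports the bound: perturbing only $a$ away from the tight example ($a = a^* + \Delta a$ with $\|\Delta a\|_1 = \epsilon$, $b = b^*$), a direct computation shows $h(i) := \sum_{j \in \mathrm{supp}(b^*)} f_{24}(i+j) \in \{12, 18\}$ for every admissible $i$, yielding $F(\Delta a, b^*) \ge \epsilon/2 > \epsilon/\sqrt 5$; since $V(b^*) = 0$ the penalty $\sqrt{V(a)V(b)}$ vanishes, and the slack $\tfrac12 - \tfrac{1}{\sqrt 5}$ is what must be spent to absorb the penalty once $b$ is also perturbed. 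The core difficulty will be verifying the inequality across all surviving combinatorial patterns — plausibly with computer assistance — to confirm the clean constant $c'(\epsilon) = \epsilon/\sqrt 5$ is sharp along each one.
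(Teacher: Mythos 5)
Your outline stops short of a proof: the central claim that the KKT analysis ``collapses the extremizer search to finitely many combinatorial patterns'' with a tractable case count is asserted, not established, and you yourself flag that ``the core difficulty will be verifying the inequality across all surviving combinatorial patterns.'' There are concrete reasons to doubt this is manageable as stated. The objective $L(a,b) = F(a,b) - \tfrac{1}{\sqrt5}\sqrt{V(a)V(b)}$ is convex in $a$ for fixed $b$ (and vice versa), but it is \emph{not} jointly convex, since the bilinear $F$ is indefinite; so the KKT system is necessary but not sufficient and may have a large, unstructured set of stationary points. Moreover, your claim that $g_i = \tfrac{1}{24}\sum_j b_j f_{24}(i+j)$ takes ``only a small number of distinct values'' is false for generic $b$ — the $g_i$ are $24$ different cyclic correlations and can all be distinct — so the argument would have to iterate KKT in $b$ as well, producing a circular dependency between the level sets of $a$ and $b$ that you have not closed. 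The warm-up computation (perturbing only $a$ with $b=b^*$, using $h(i)\in\{12,18\}$) is correct but addresses only the easy ray $V(b)=0$, where the penalty term vanishes; the hard regime is when both $V(a)$ and $V(b)$ are positive.

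For contrast, the paper's route avoids these issues by first \emph{homogenizing}: AM--GM replaces $\sqrt{V(a)V(b)}$ by $\tfrac12(V(a)+V(b))$, and the elementary inequality $\tfrac29 xy \ge x+y$ for $x,y\ge 9$ replaces the linear terms $\sum a_i+\sum b_i$ by $\tfrac{2}{9}(\sum a_i)(\sum b_i)$, leaving a \emph{homogeneous quadratic} inequality valid for all $a,b\ge 0$ (Proposition 5.2). By Cauchy--Schwarz in $b$ this becomes the single-variable $\ell^2$ operator bound $\|(a*\varphi)_+\|_2 \le 2\|a\|_2$ with $\varphi = \tfrac{16}{3}\mathbbm{1}-2\sqrt5 f_{24}$, which is then proved via a variational/eigenvector argument and the tailored norm $N(a)=\max(9\|a\|_\infty,\|a\|_1)$: by convexity of $a\mapsto\|(a*\varphi)_+\|_1$, the bound $N((a*\varphi)_+)\le 2N(a)$ reduces to a check over $\{0,1\}^{24}$ with $a_0=1$, i.e., $\sum_{k\le 8}\binom{23}{k}<10^6$ cases, which a short program settles. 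Your KKT idea could plausibly be pushed through, but as written it is missing the key reduction that makes the finite check both well-specified and small; homogenization plus the auxiliary norm is precisely what supplies that.
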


\vspace{1.5mm}

\begin{proof}[Proof of Theorem \ref{modular}]
If $24 \mid q$, then Theorem \ref{modular} follows immediately from \eqref{key} and Proposition \ref{optimization0} (with $c(\ep) = c'(24\ep)/24$). Otherwise, we use a simple ``lift-up" argument to reduce to the case $q \mid 24$. Define $\wt{w}_A,\wt{w}_B : \Z/24q\Z \to [0,1]$ by $\wt{w}_A(x) := \frac{1}{24}\sum_{\substack{y \in \Z/24\Z \\ y \equiv x \modd q}} w_A(y), \wt{w}_B(x) := \frac{1}{24}\sum_{\substack{y \in \Z/24\Z \\ y \equiv x \modd q}} w_B(y)$. Then $$\frac{1}{q}\sum_{t \in \Z/q\Z} (w_A*w_B)(t)f_q(t) = \frac{1}{24q}\sum_{t \in \Z/24q\Z} (\wt{w}_A*\wt{w}_B)(t)f_{24q}(t)$$ and $$\frac{1}{24q}\sum_{x \in \Z/24q\Z} \wt{w}_A(x) = \frac{1}{q}\sum_{x \in \Z/q\Z} w_A(x)$$ $$\frac{1}{24q}\sum_{x \in \Z/24q\Z} \wt{w}_B(x) = \frac{1}{q}\sum_{x \in \Z/q\Z} w_B(x).$$
\end{proof}

\vspace{1.5mm}

\section{Converting to Integers}\label{integersection}

In this section, we ``boost" the solution to the modular problem (Theorem \ref{modular}) to the integers to establish our main theorem (Theorem \ref{3/8+epsilon}). For subsets $A,B \sub [N]$ with $|A|,|B| \ge (\frac{3}{8}+\ep)N$ we shall, as in the modular problem, look at the number of squares in the weighted sumset of $A$ and $B$: $$\sum_{n \ge 1} (1_A*1_B)(n)1_S(n),$$ where $S \sub \N$ is the set of perfect squares, $S := \{m^2 : m \in \N\}$. Our approach is inspired by the arithmetic regularity lemma (see, e.g., \cite{eberhard, greentao}), though a much lower-tech version suffices for our purposes; the dependence on the relevant parameters will be singly-exponential rather than tower-type. 

\vs

\begin{definition}
Fix (parameters) $Q \in \N$ and $\eta \in \frac{1}{\N}$. For $k \in \{0,1,\dots,\eta^{-1}-1\}$, let \vspace{1.3mm} $$I_{\eta,k} = \Big(k\eta N, (k+1)\eta N\Big]\cap \N. \vspace{1mm}$$ For $N \in \N$ (large) and $A \sub [N]$, define\footnote{Extend (the domain of) $w^A_{Q;\eta,k}$ to $\N$ by setting $w^A_{Q;\eta,k} = 0$ outside $I_{\eta,k}$.} the function $w^A_{Q;\eta,k} : I_{\eta,k} \to [0,1]$ by $$w^A_{Q;\eta,k}(n) := \frac{\#\{m \in I_{\eta,k} : m \in A \text{ and } \hspace{0.5mm} m \equiv n {\normalfont \hspace{0.8mm}  \text{mod} \hspace{1.2mm}} Q\}}{\#\{m \in I_{\eta,k} : m \equiv n {\normalfont \hspace{0.8mm}  \text{mod} \hspace{1.2mm}} Q\}}.$$ Finally, define the function $w^A_{Q;\eta} : \N \to [0,1]$ by $$w_{Q;\eta}^A := \sum_{k=0}^{\eta^{-1}-1} w^A_{Q;\eta,k}1_{I_{\eta,k}}.$$
\end{definition}

\begin{remark}\label{approximantsintuition}
One should think of the function $w^A_{Q;\eta,k}$ as the best mod $Q$ approximation to $A$, or as a ``smoothed out" version of $A$ modulo $Q$, on $I_{\eta,k}$. Indeed, for $n \in I_{\eta,k}$, the function $w^A_{Q;\eta,k}(n)$ just depends on the residue of $n$ modulo $Q$, and, immediately from the definition, for any $r \in \{0,\dots,Q-1\}$, one has \begin{equation}\label{smoothingindeed} \sum_{\substack{n \in I_{\eta,k} \\ n \equiv r {\normalfont \hspace{0.8mm}  \text{mod} \hspace{0.8mm}} Q}} w^A_{Q;\eta,k}(n) = \sum_{\substack{n \in I_{\eta,k} \\ n \equiv r {\normalfont \hspace{0.8mm}  \text{mod} \hspace{0.8mm}} Q}} 1_A(n).\end{equation} The use of $w^A_{Q;\eta}$ comes from the fact that its Fourier transform models that of $A$ nearly perfectly on rationals with denominator dividing $Q$. As long as $Q$ is sufficiently composite (which we will choose it to be), we don't need to care much about other rationals, since the Fourier transform of the indicator function of the squares will be sufficiently small there. 
\end{remark}

\vspace{1.5mm}

For the following lemma, fix $Q,N \in \N, \eta \in \frac{1}{\N}$, and $A \sub [N]$. 

\vspace{1.5mm}

\begin{definition}
Define the \textit{balanced function} $f^A_{Q;\eta} : \N \to \R$ by $f^A_{Q;\eta} := 1_A-w^A_{Q;\eta}$. 
\end{definition}

\vspace{0.1mm}

\begin{lemma}\label{fourierapproximation}
Take some $a,q \in \N$ with $q \mid Q$. Then, for any $\beta \in \R$, it holds that $$\left|\wh{f^A_{Q;\eta}}\left(\frac{a}{q}+\beta\right)\right| \le 2|\beta|\eta N^2.$$
\end{lemma}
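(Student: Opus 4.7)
The plan is to expand the Fourier transform, split the sum over the buckets $I_{\eta,k}$, and on each bucket exploit two independent cancellation features of the balanced function $f^A_{Q;\eta}$. Concretely, start from
\[
\widehat{f^A_{Q;\eta}}\!\left(\tfrac{a}{q}+\beta\right) \;=\; \sum_{k=0}^{\eta^{-1}-1}\,\sum_{n\in I_{\eta,k}} f^A_{Q;\eta,k}(n)\, e\!\left(-\tfrac{na}{q}\right) e(-n\beta),
\]
and aim to bound the inner sum for each fixed $k$ by $O(|\beta|\eta^2 N^2)$, so that after summing over the $\eta^{-1}$ buckets I recover the desired bound $O(|\beta|\eta N^2)$.

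The key observation is that the character $n\mapsto e(-na/q)$ is constant on each residue class modulo $Q$, because $q\mid Q$. Combined with \eqref{smoothingindeed}, which says that $f^A_{Q;\eta,k}$ sums to zero on every residue class modulo $Q$ inside $I_{\eta,k}$, this forces
\[
\sum_{n\in I_{\eta,k}} f^A_{Q;\eta,k}(n)\, e\!\left(-\tfrac{na}{q}\right) = 0.
\]
This is the whole point of the construction of $w^A_{Q;\eta,k}$: after subtracting it from $1_A$, the balanced function is orthogonal to every $Q$-periodic function.

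Next I would fix a reference point $n_k\in I_{\eta,k}$ (say $n_k = k\eta N$) and use the vanishing identity to subtract the ``constant'' oscillation $e(-n_k\beta)$ for free, rewriting the inner sum as
\[
\sum_{n\in I_{\eta,k}} f^A_{Q;\eta,k}(n)\, e\!\left(-\tfrac{na}{q}\right)\bigl[e(-n\beta)-e(-n_k\beta)\bigr].
\]
Now $e(-n\beta)$ varies slowly on an interval of length $\eta N$: the elementary estimate $|e(-n\beta)-e(-n_k\beta)|\le 2\pi|\beta||n-n_k|\le 2\pi|\beta|\eta N$ (or a sharper version that trades the $2\pi$ for a smaller constant by placing $n_k$ in the middle or comparing to an integral) combined with the trivial bound $|f^A_{Q;\eta,k}(n)|\le 1$ and $\#I_{\eta,k}=\eta N$ gives a per-interval contribution of order $|\beta|\eta^2 N^2$. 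Summing over $k$ then yields $|\beta|\eta N^2$ up to an absolute constant, which matches the stated bound.

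There is no real obstacle: every step is a direct calculation. The only point that has to be identified cleanly is the twofold cancellation — using $q\mid Q$ to note that $e(-na/q)$ is $Q$-periodic, and then using the mean-value property \eqref{smoothingindeed} of $w^A_{Q;\eta,k}$ to kill the entire $a/q$-frequency contribution on each bucket — before estimating the slow variation coming from $\beta$.
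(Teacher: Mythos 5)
Your proposal is correct and is essentially the same argument as the paper's: decompose over the buckets $I_{\eta,k}$, use that $n\mapsto e(-na/q)$ is $Q$-periodic (since $q\mid Q$) together with \eqref{smoothingindeed} to get the per-bucket vanishing identity, subtract that zero contribution, and then estimate the slowly varying $\beta$-phase by a Lipschitz bound. The only difference is the explicit $2\pi$ in your chord estimate: you land on roughly $2\pi|\beta|\eta N^2$ rather than the stated $2|\beta|\eta N^2$, but this is because the paper's own step ``$|e(x)-e(y)|\le|x-y|$'' omits the $2\pi$ from the convention $e(\theta)=e^{2\pi i\theta}$; the constant is immaterial to how the lemma is used (it feeds into a free parameter $\delta$ in Proposition \ref{squarecountfourieruniform}), and your version, which uses $|f^A_{Q;\eta,k}|\le 1$ directly rather than triangle-inequality on $1_A$ and $w^A_{Q;\eta,k}$ separately, is in fact tighter than the paper's argument once the $2\pi$ is corrected.
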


\begin{proof}
For $k \in \{0,\dots,\eta^{-1}-1\}$, define $f^A_{Q;\eta,k} := f^A_{Q;\eta}1_{I_{\eta,k}} = 1_{I_{\eta,k}}1_A-w^A_{Q;\eta,k}$ so that \begin{equation}\label{sumdecomp} f^A_{Q;\eta} = \sum_{k=0}^{\eta^{-1}-1} f^A_{Q;\eta,k}.\end{equation} Fix $a,q \in \N$ with $q \mid Q$, and fix $\beta \in \R$. By \eqref{sumdecomp}, linearity of the fourier transform, and the triangle inequality, to prove Lemma \ref{fourierapproximation} it suffices to show $$\left|\wh{f^A_{Q;\eta,k}}(\frac{a}{q}+\beta)\right| \le 2|\beta| \eta N |I_{\eta,k}| \vspace{1.5mm}$$ for each $k \in \{0,\dots,\eta^{-1}-1\}$. So fix some such $k$. By definition, \begin{equation}\label{explicitfAhat} \wh{f^A_{Q;\eta,k}}(\frac{a}{q}+\beta) = \sum_{n \in I_{\eta,k}} 1_A(n) e\left((\frac{a}{q}+\beta)n\right)-\sum_{n \in I_{\eta,k}} w^A_{Q;k}(n)e\left((\frac{a}{q}+\beta)n\right).\end{equation} Letting $L = \lfloor k\eta N \rfloor+1$ denote the left endpoint of $I_{\eta,k}$, we trivially from \eqref{explicitfAhat} have $$\left|\wh{f^A_{Q;\eta,k}}(\frac{a}{q}+\beta)\right| = \left|\sum_{n \in I_{\eta,k}} 1_A(n) e\left((\frac{a}{q}+\beta)(n-L)\right)-\sum_{n \in I_{\eta,k}} w^A_{Q;\eta,k}(n)e\left((\frac{a}{q}+\beta)(n-L)\right)\right|.$$ The reason for shifting the phase by $L$ is that if we now use $$\sum_{n \in I_{\eta,k}} 1_A(n) e\left(\frac{a(n-L)}{q}\right) - \sum_{n \in I_{\eta,k}} w^A_{Q;\eta,k}(n)e\left(\frac{a(n-L)}{q}\right) = 0$$ (which follows from \eqref{smoothingindeed} and that $q \mid Q$) to write $$\hspace{-20mm} \left|\wh{f^A_{Q;\eta,k}}(\frac{a}{q}+\beta)\right| = \Bigg|\sum_{n \in I_{\eta,k}} 1_A(n) \left[e\left((\frac{a}{q}+\beta)(n-L)\right)-e\left(\frac{a(n-L)}{q}\right)\right]$$ $$\hspace{40mm} -\sum_{n \in I_{\eta,k}} w^A_{Q;\eta,k}(n)\left[e\left((\frac{a}{q}+\beta)(n-L)\right)-e\left(\frac{a(n-L)}{q}\right)\right]\Bigg|,$$ then the trivial $|e(x)-e(y)| \le |x-y|$ is strong enough to give the sufficient bound \begin{align*}\left|\wh{f^A_{Q;\eta,k}}(\frac{a}{q}+\beta)\right| &\le \sum_{n \in I_{\eta,k}} 1_A(n)|\beta|(n-L)+\sum_{n \in I_{\eta,k}} |w^A_{Q;\eta,k}(n)|\hspace{0.5mm}|\beta|\hspace{0.5mm}(n-L) \\ &\le 2|\beta|\eta N|I_{\eta,k}|, \end{align*} the last inequality using that $n-L \le \eta N$ for each $n \in I_{\eta,k}$.
\end{proof}

\vspace{1mm}

\begin{remark}\label{theplan}
The plan to prove Theorem \ref{3/8+epsilon} is to decompose $$1_A*1_B = w^A_{Q;\eta}*w^B_{Q;\eta}+f^A_{Q;\eta}*w^B_{Q;\eta}+w^A_{Q;\eta}*f^B_{Q;\eta}+f^A_{Q;\eta}*f^B_{Q;\eta}$$ and use Lemma \ref{fourierapproximation} to argue that the ``number" of squares ``in" $1_A*1_B$ is approximately the same as that in $w^A_{Q;\eta}*w^B_{Q;\eta}$. The latter, involving the convolution of two functions constant on residues modulo $Q$, is more easily calculable and comes down to the weighted number of mod $Q$ quadratic residues in the convolution of the natural mod $Q$ projections of $w^A_{Q;\eta},w^B_{Q;\eta}$. The following (with Lemma \ref{fourierapproximation}) will be used to prove the validity of the approximation. 
\end{remark}

\vspace{1mm}

\begin{proposition}\label{squarecountfourieruniform}
Let $f,g : [N] \to [-1,1]$ be ($1$-bounded) functions. Suppose $\delta > 0$ is such that $\left|\wh{f}(\frac{a}{q}+\beta)\right| \le \delta |\beta| N^2$ for each $a,q \le \lam^{-2}$ and\footnote{We will only need the condition for $|\beta| \le \frac{\lam^{-2}}{2N}$.} $\beta \in \R$. Then we have $$\left|\sum_{n \ge 1} (f*g)(n)1_S(n)\right| \le 10(\delta\lam^{-8}+\lam)N^{3/2}.$$
\end{proposition}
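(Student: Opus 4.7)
The plan is to prove Proposition \ref{squarecountfourieruniform} by a standard Hardy--Littlewood dissection of the circle: the hypothesis on $\wh{f}$ handles a neighborhood of each rational with small denominator (the ``major arcs''), while Weyl-type bounds for quadratic exponential sums handle the rest (the ``minor arcs''). First, I would expand via Fourier inversion on $\Z$:
\[
\sum_{n \ge 1} (f*g)(n)1_S(n) = \int_0^1 \wh{f}(\theta)\wh{g}(\theta)S(\theta)\,d\theta,
\]
where $S(\theta) := \sum_{1 \le m \le \sqrt{2N}} e(m^2\theta)$ (valid since $f*g$ is supported on $\{2,\dots,2N\}$). Then I would define the major arcs
\[
\mf{M} := \bigcup_{q \le \lam^{-2}}\bigcup_{0 \le a < q} \Big\{\theta \in \T : |\theta - a/q| \le \lam^{-2}/(2N)\Big\},
\]
and set $\mf{m} := \T \setminus \mf{M}$.

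For the major-arc contribution, I would apply the hypothesis $|\wh{f}(a/q+\beta)| \le \delta|\beta|N^2$ together with the trivial bounds $|\wh{g}| \le N$ and $|S| \le \sqrt{2N}$. Each single major arc then contributes at most
\[
\int_{|\beta|\le \lam^{-2}/(2N)} \delta|\beta|N^2 \cdot N \cdot \sqrt{2N}\,d\beta \le \tfrac{\sqrt{2}}{4}\,\delta\lam^{-4}N^{3/2},
\]
and summing over the at most $\sum_{q \le \lam^{-2}} q \le \lam^{-4}$ pairs $(a,q)$ gives a total contribution of $O(\delta\lam^{-8}N^{3/2})$, matching the first term in the claim.

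For the minor-arc contribution, Dirichlet's approximation theorem applied with parameter $M := 2N\lam^2$ writes each $\theta \in \mf{m}$ as $\theta = a/q + \beta$ with $(a,q)=1$ and $|\beta| \le 1/(qM)$; the exclusion $\theta \notin \mf{M}$ forces $q > \lam^{-2}$ (otherwise $|\beta|\le \lam^{-2}/(2N)$ would put $\theta$ in $\mf{M}$), while Dirichlet forces $q \le 2N\lam^2$. Weyl's inequality for quadratic exponential sums then yields
\[
|S(\theta)| \ll \sqrt{N/q} + \sqrt{q} + N^{1/4} \ll \lam\sqrt{N}
\]
uniformly on $\mf{m}$. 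Combining this pointwise estimate with Cauchy--Schwarz and Parseval,
\[
\int_{\mf{m}} |\wh{f}(\theta)\wh{g}(\theta)S(\theta)|\,d\theta \le \Big(\sup_{\mf{m}}|S|\Big)\|f\|_2\|g\|_2 \ll \lam\sqrt{N} \cdot N = \lam N^{3/2},
\]
which gives the second term. Combining the two contributions and absorbing absolute constants into the factor $10$ completes the proof.

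The main obstacle, and the only nontrivial step, is the minor-arc estimate: one must verify that Weyl's inequality genuinely delivers $|S(\theta)| \lesssim \lam\sqrt{N}$ throughout the range $\lam^{-2} < q \le 2N\lam^2$, handling any logarithmic loss (e.g.\ by replacing $\lam$ by $\lam/\sqrt{\log N}$, which is harmless for $N$ large depending on $\lam$) or by invoking a log-free version of Weyl's bound. The major-arc side, by contrast, is a direct substitution of the hypothesis.
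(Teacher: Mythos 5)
Your proof follows the same Hardy--Littlewood dissection as the paper: Fourier inversion, the hypothesis on $\wh{f}$ plus trivial bounds on the major arcs, and a pointwise minor-arc bound on the quadratic exponential sum combined with Cauchy--Schwarz and Plancherel. The one step you flag as nontrivial -- a log-free minor-arc bound $|S(\theta)|\le O(\lam) N^{1/2}$ -- is exactly where the paper substitutes a black box, namely Lyall's Proposition 1 from \cite{lyall}, which gives $|\wh{1_{S_N}}(\theta)|\le 5\lam N^{1/2}$ for $N$ large and $\theta$ off the major arcs, so your approach and concern are both sound and the paper simply cites a clean reference rather than rederiving it from Weyl.
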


\begin{proof}
We may replace $S$ by $S_{2N} := \{m^2 : m \in \N, m^2 \le 2N\}$ and write \begin{equation}\label{fourierintegers} \sum_{n \ge 1} (f*g)(n)1_{S_{2N}}(n) = \int_\T \wh{f}(\theta)\wh{g}(\theta)\wh{1_{S_{2N}}}(-\theta)d\theta.\end{equation} We import the needed ``minor arc" estimate from \cite{lyall}: \begin{lemma}[\cite{lyall}, Proposition 1]\label{minorarcestimate} For any $\lam > 0$, if $N \in \N$ is sufficiently large and $\theta \in \T$ is such that $|\theta-\frac{a}{q}| > \frac{\lam^{-2}}{N}$ for each $a,q \le \lam^{-2}$, then $|\wh{1_{S_N}}(\theta)| \le 5\lam N^{1/2}$.\end{lemma} \noindent This lemma together with Cauchy-Schwarz and Plancherel immediately gives \begin{align*} \left|\int_{\mf{m}} \wh{f}(\theta)\wh{g}(\theta)\wh{1_{S_{2N}}}(-\theta)d\theta\right| &\le 5\lam \sqrt{2N} \int_{\mf{m}} |\wh{f}(\theta)||\wh{g}(\theta)|d\theta \\ &\le 10\lam N^{1/2}\left(\int_\T |\wh{f}(\theta)|^2d\theta\right)^{1/2}\left(\int_\T |\wh{g}(\theta)|^2d\theta\right)^{1/2} \\ &= 10\lam N^{1/2}\left(\sum_{n \le N} f(n)^2\right)^{1/2}\left(\sum_{n \le N} g(n)^2\right)^{1/2} \\ &\le 10 \lam N^{3/2},\end{align*} where $\mf{m}$ is defined so that $$\T\setminus \mf{m} := \bigcup_{q=1}^{\lam^{-2}}\bigcup_{\substack{1 \le a \le q \\ (a,q) = 1}} \left\{\theta \in \T : \left|\theta-\frac{a}{q}\right| \le \frac{\lam^{-2}}{2N}\right\}.$$ Letting $\beta_* = \frac{\lam^{-2}}{2N}$ for notational ease, we handle the ``major arc" as follows: \begin{align*}\left|\int_{\T\setminus \mf{m}} \wh{f}(\theta)\wh{g}(\theta)\wh{1_{S_{2N}}}(-\theta)d\theta \right| &\le \sum_{q=1}^{\lam^{-2}}\sum_{1 \le a \le q} \left|\int_{\frac{a}{q}-\beta_*}^{\frac{a}{q}+\beta_*} \wh{f}(\theta)\wh{g}(\theta)\wh{1_{S_{2N}}}(-\theta)d\theta\right| \\ &\le \sum_{q=1}^{\lam^{-2}}\sum_{1 \le a \le q}\int_{-\beta_*}^{\beta_*} \delta |\beta| N^2 N \sqrt{2N}d\beta \\ &\le \sqrt{2}\delta N^{7/2}\sum_{q=1}^{\lam^{-2}} \sum_{1 \le a \le q} 2\beta_*^2 \\ &\le 10\delta \lam^{-8} N^{3/2}.\end{align*} (The bound ``10" here is loose and used for simplicity.) We're done by \eqref{fourierintegers}. 
\end{proof}

\vspace{1.5mm}

To complete the plan outlined in Remark \ref{theplan}, we need to argue that $w^A_{Q;\eta}*w^B_{Q;\eta}$ ``contains" many squares. We start by focusing on particular intervals. We abstract out from our exact the situation the relevant property of $w^A_{Q;\eta,k}$ and $w^B_{Q;\eta,k}$. 

\vspace{1.5mm}

\begin{proposition}\label{particularintervals}
Fix $\ep > 0$ and $Q \ge 1$. Let functions $\ol{w}_1,\ol{w}_2 : \Z/Q\Z \to [0,1]$ satisfy $$\sum_{t \in \Z/Q\Z} \ol{w}_i(t) \ge \left(\frac{3}{8}+\ep\right)Q$$ for $i=1,2$. For large $M \in \N$ and intervals $I_i = [k_iM,(k_i+1)M]$, $i=1,2$, define $$w_i(n) := 1_{I_i}(n) \hspace{0.5mm} \ol{w}_i(n {\normalfont \hspace{1.3mm}  \text{mod} \hspace{1.15mm}} Q)$$ for $i=1,2$. Then we have the lower bound $$\sum_{n \ge 1} (w_1*w_2)(n)1_S(n) \ge \frac{1}{200}c(\ep)\frac{M^{3/2}}{\sqrt{k_1+k_2}},$$ where $c(\ep)>0$ is the constant guaranteed by Theorem \ref{modular}.
\end{proposition}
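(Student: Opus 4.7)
The plan is to count directly by writing $\sum_{n \ge 1}(w_1*w_2)(n)1_S(n) = \sum_m (w_1*w_2)(m^2)$, where only $m$ with $m^2 \in I_1+I_2 \sub [TM,(T+2)M]$ contribute; set $T := k_1+k_2$ and assume $T \ge 1$ (as the bound is stated). For any such $m$,
$$(w_1*w_2)(m^2) = \sum_{n_1 \in \Z} 1_{I_1}(n_1)\,1_{I_2}(m^2-n_1)\,\ol{w}_1(n_1\modd Q)\,\ol{w}_2((m^2-n_1)\modd Q),$$
and the set of $n_1$ actually contributing is an interval of length $\ell(m) := \min(m^2-TM,\,(T+2)M-m^2)$, namely the ``tent'' function on $[TM,(T+2)M]$ peaking at $M$. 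Partitioning this interval by $a := n_1 \modd Q$ and using that any length-$\ell(m)$ interval meets each residue class in at least $\lfloor \ell(m)/Q\rfloor$ integers gives the pointwise lower bound
$$(w_1*w_2)(m^2) \ge \lfloor \ell(m)/Q\rfloor \cdot \sum_{a \in \Z/Q\Z} \ol{w}_1(a)\ol{w}_2((m^2-a)\modd Q) = \lfloor \ell(m)/Q\rfloor \cdot Q\,(\ol{w}_1*\ol{w}_2)(m^2\modd Q).$$

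I would then restrict attention to the ``central'' set $E := \{m : m^2 \in [(T+\tfrac{1}{2})M,(T+\tfrac{3}{2})M]\}$, on which $\ell(m) \ge M/2$; provided $M \ge 4Q$ this yields $(w_1*w_2)(m^2) \ge \tfrac{M}{4}(\ol{w}_1*\ol{w}_2)(m^2\modd Q)$ for each $m \in E$. The key step is then to recognize $\sum_{m \in E}(\ol{w}_1*\ol{w}_2)(m^2\modd Q)$ as an essentially uniform average over $\Z/Q\Z$: partitioning $E$ according to $r := m\modd Q$ (so $m^2 \equiv r^2\modd Q$) and using that each residue class contains at least $\lfloor |E|/Q\rfloor$ elements of $E$ produces
$$\sum_{m \in E}(\ol{w}_1*\ol{w}_2)(m^2\modd Q) \ge \lfloor|E|/Q\rfloor \sum_{r \in \Z/Q\Z}(\ol{w}_1*\ol{w}_2)(r^2\modd Q) = \lfloor|E|/Q\rfloor\sum_{t \in \Z/Q\Z}f_Q(t)(\ol{w}_1*\ol{w}_2)(t),$$
which by Theorem \ref{modular} is at least $\lfloor|E|/Q\rfloor\cdot c(\ep)Q$. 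A short calculation yields $|E| \ge \tfrac{1}{4}\sqrt{M/T}$ for $T \ge 1$ and $M$ sufficiently large, so if $M$ is further taken large enough (e.g.\ $M \gg Q^2T$) to force $\lfloor|E|/Q\rfloor \ge |E|/(2Q)$, assembling the pieces gives $\sum_n(w_1*w_2)(n)1_S(n) \ge \tfrac{1}{32}\,c(\ep)\,M^{3/2}/\sqrt{T}$, comfortably exceeding the target $\tfrac{1}{200}\,c(\ep)\,M^{3/2}/\sqrt{T}$.

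The main obstacle is purely bookkeeping rather than substantive: $O(1)$ losses enter both when counting integers in a length-$\ell(m)$ interval within a fixed residue class mod $Q$, and when counting elements of $E$ in a fixed residue class mod $Q$. Both are absorbed by the hypothesis that $M$ is large compared to $Q$ and $T$, which will be ensured in Section \ref{integersection} by our freedom in choosing $\eta$ (hence $M = \eta N$) appropriately; the generous factor of $\tfrac{1}{200}$ in the stated bound leaves ample room for these approximations.
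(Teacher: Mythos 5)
Your proof is correct and follows essentially the same route as the paper's: restrict to a central window of squares near $(k_1+k_2+1)M$ where each has $\gtrsim M$ representations in $I_1 + I_2$ (with each residue class mod $Q$ contributing about equally), convert the sum over square roots $m$ into a sum over $t\in\Z/Q\Z$ weighted by $f_Q(t)$ via equidistribution of $m \bmod Q$ on a short interval, and finish with Theorem \ref{modular}. The only differences are cosmetic: you use the width-$M$ window $[(T+\tfrac12)M,(T+\tfrac32)M]$ and track floor-function $O(1)$ losses explicitly, while the paper uses a narrower width-$M/5$ window and absorbs the corresponding errors into a $(1+o(1))$ as $M\to\infty$; both yield the stated constant with room to spare.
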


\begin{proof}
Let $$J = \left[(k_1+k_2+1)M-\frac{1}{10}M,(k_1+k_2+1)M+\frac{1}{10}M\right]$$ so that for any $n \in J$ and $a \in \{0,\dots,Q-1\}$, it holds that $$\#\left\{m \in I_1 : m \equiv a \modd Q \text{ and } n-m \in I_2\right\} \ge \frac{1}{10}\frac{M}{Q}$$ (provided $M$ is large enough). Therefore, \begin{align*} \sum_{n \ge 1} (w_1*w_2)(n)1_S(n) &\ge \sum_{n \in J} \sum_{\substack{m \in I_1 \\ n-m \in I_2}} w_1(m)w_2(n-m)1_S(n) \\ &= \sum_{\substack{n \in J \\ n \in S}} \sum_{a=0}^{Q-1} \ol{w}_1(a)\ol{w}_2(n-a \modd Q)\sum_{\substack{m \equiv a \modd Q \\ m \in I_1 \\ n-m \in I_2}} 1 \\ &\ge \frac{M}{10Q}Q\sum_{\substack{n \in J \\ n \in S}} (\ol{w}_1*\ol{w}_2)(n \modd Q) \\ &= \frac{M}{10}\sum_{t=0}^{Q-1} (\ol{w}_1*\ol{w}_2)(t)\cdot \#\{m \in \N : m^2 \in J \text{, } m^2 \equiv t \modd Q\}. \end{align*} Note that, for $\ol{J} := \{m \in \N : m^2 \in J\}$, we have as $M \to \infty$ that $$\#\{m \in \N : m^2 \in J \text{, } m^2 \equiv t \modd Q\} = \left(1+o(1)\right)f_Q(t)\frac{\left|\ol{J}\right|}{Q}.$$ We lower-bound \begin{align*} \left|\ol{J}\right| &\ge \frac{1}{2}\left(\sqrt{(k_1+k_2+1)M+\frac{1}{10}M}-\sqrt{(k_1+k_2+1)M-\frac{1}{10}M}\right) \\ &= \frac{1}{2}\frac{\frac{2}{10}M}{\sqrt{(k_1+k_2+1)M+\frac{1}{10}M}+\sqrt{(k_1+k_2+1)M-\frac{1}{10}M}} \\ &\ge \frac{1}{2} \frac{\frac{1}{10}M}{\sqrt{(k_1+k_2)M}}.\end{align*} Combining everything, we obtain $$\sum_{n \ge 1} (w_1*w_2)(n)1_S(n) \ge \frac{M}{10Q}\frac{\sqrt{M}}{20\sqrt{k_1+k_2}}\sum_{t=0}^{Q-1} (\ol{w}_1*\ol{w}_2)(t)f_Q(t).$$ By the assumptions of the current theorem, Theorem \ref{modular} finishes the proof. 
\end{proof}

\vspace{1.5mm}

Back to our specific setting, we can now handle $w^A_{Q;\eta}*w^B_{Q;\eta}$. 

\vspace{1.5mm}

\begin{proposition}\label{waconvolutionwb}
Fix $\ep>0, Q \in \N$, and $\eta \in \frac{1}{\N}$. Then for all large $N \in \N$ and any $A,B \sub [N]$ with $|A|,|B| \ge (\frac{3}{8}+\ep)N$, we have $$\sum_{n \ge 1} (w^A_{Q;\eta}*w^B_{Q;\eta})(n)1_S(n) \ge \frac{\ep^2}{5000}c\left(\frac{\ep}{2}\right)N^{3/2},$$ where $c(\ep) > 0$ is the constant guaranteed by Theorem \ref{modular}. 
\end{proposition}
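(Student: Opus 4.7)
The plan is to write
\[
w^A_{Q;\eta} * w^B_{Q;\eta} = \sum_{k_1,k_2=0}^{\eta^{-1}-1} (w^A_{Q;\eta,k_1} 1_{I_{\eta,k_1}}) * (w^B_{Q;\eta,k_2} 1_{I_{\eta,k_2}}),
\]
restrict attention to those pairs $(k_1,k_2)$ on which the local densities of $A$ on $I_{\eta,k_1}$ and of $B$ on $I_{\eta,k_2}$ remain at least $\frac{3}{8}+\frac{\ep}{2}$, and apply Proposition \ref{particularintervals} to each such pair. Since every term in the decomposition is non-negative, dropping the bad pairs only loses a constant.

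For each $k \in \{0,\dots,\eta^{-1}-1\}$, set $\alpha_k := |A \cap I_{\eta,k}|/|I_{\eta,k}|$ and define $G_A := \{k : \alpha_k \ge \frac{3}{8}+\frac{\ep}{2}\}$, and analogously $\beta_k, G_B$. From $|A| \ge (\frac{3}{8}+\ep)N$ we get $\sum_k \alpha_k \ge (\frac{3}{8}+\ep)\eta^{-1}$, and combined with the trivial bound $\alpha_k \le 1$, the pigeonhole inequality
\[
\left(\tfrac{3}{8}+\tfrac{\ep}{2}\right)\!\left(\eta^{-1}-|G_A|\right) + |G_A| \ge \left(\tfrac{3}{8}+\ep\right)\eta^{-1}
\]
forces $|G_A| \ge \frac{4\ep}{5}\eta^{-1}$, and similarly $|G_B| \ge \frac{4\ep}{5}\eta^{-1}$.

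For $k_1 \in G_A$, the function $w^A_{Q;\eta,k_1}$ is constant on residue classes mod $Q$ within $I_{\eta,k_1}$, so it descends to $\bar{w}^A_{k_1} : \Z/Q\Z \to [0,1]$ with $w^A_{Q;\eta,k_1}(n) = \bar{w}^A_{k_1}(n \modd Q)$ on $I_{\eta,k_1}$. Summing \eqref{smoothingindeed} over residues gives $\sum_{n \in I_{\eta,k_1}} w^A_{Q;\eta,k_1}(n) = |A \cap I_{\eta,k_1}|$, and since each residue class in $I_{\eta,k_1}$ has size $\eta N / Q$ up to $\pm 1$, for $N$ large we obtain $\sum_a \bar{w}^A_{k_1}(a) \ge (\frac{3}{8}+\frac{\ep}{3})Q$, matching the hypothesis of Proposition \ref{particularintervals}. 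Applying that proposition with $\ep/3$ in place of $\ep$ and $M = \eta N$ to each $(k_1,k_2) \in G_A \times G_B$ gives a contribution of at least $\frac{c(\ep/3)}{200}(\eta N)^{3/2}/\sqrt{k_1+k_2}$. Using $k_1+k_2 \le 2\eta^{-1}$ and summing,
\[
\sum_n (w^A_{Q;\eta} * w^B_{Q;\eta})(n) 1_S(n) \ge |G_A||G_B| \cdot \frac{c(\ep/3)}{200\sqrt{2}}\, \eta^2 N^{3/2} \ge \frac{16 \ep^2 c(\ep/3)}{5000\sqrt{2}} N^{3/2},
\]
and since $c(\ep) = \ep/\sqrt{5}$ by Theorem \ref{modular} gives $c(\ep/3) = \frac{2}{3}c(\ep/2)$, the right-hand side easily exceeds $\frac{\ep^2}{5000}c(\ep/2)N^{3/2}$.

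The only real obstacle is the density pigeonhole: one must ensure the $\eta^{-1}$-factors absorbed into $|G_A|, |G_B|$ exactly offset the $\eta$-factors from $(\eta N)^{3/2}\sqrt{\eta}$, so that the final bound is of order $N^{3/2}$ independent of $\eta$. The bookkeeping above confirms this balance, with enough slack left over to absorb the loose constant $\frac{1}{5000}$ in the statement.
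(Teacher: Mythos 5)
Your proposal is correct and follows essentially the same route as the paper: decompose $w^A_{Q;\eta}*w^B_{Q;\eta}$ into the blocks $w^A_{Q;\eta,k_1}*w^B_{Q;\eta,k_2}$, pigeonhole to find $\gtrsim \ep\eta^{-1}$ good indices for each of $A$ and $B$, invoke Proposition \ref{particularintervals} with $M=\eta N$ on the good pairs, and bound $\sqrt{k_1+k_2}\le\sqrt{2\eta^{-1}}$ before summing. The only cosmetic differences are in the bookkeeping: the paper uses a local density threshold of $\frac{3}{8}+\frac{3\ep}{4}$ on $|A\cap I_{\eta,k}|$ to guarantee $|J^A|\ge\frac{\ep}{4}\eta^{-1}$ with the stronger conclusion $\sum_a\bar{w}^A_k(a)\ge(\frac{3}{8}+\frac{\ep}{2})Q$ (feeding $c(\ep/2)$ directly into the final bound), whereas you take a threshold of $\frac{3}{8}+\frac{\ep}{2}$, get the larger $|G_A|\ge\frac{4\ep}{5}\eta^{-1}$, but then relax to $\frac{3}{8}+\frac{\ep}{3}$ after accounting for the $\pm 1$ fluctuation in residue-class sizes, so you invoke $c(\ep/3)$ and convert back to $c(\ep/2)$ at the end using linearity of $c$. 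Both sets of constants comfortably clear the loose $\frac{1}{5000}$ in the statement, and your arithmetic checks out.
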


\begin{proof}
It is easy to see that $|A| \ge (\frac{3}{8}+\ep)N$ implies there are at least $\frac{\ep}{3}\eta^{-1}$ values of $k \in \{0,\dots,\eta^{-1}-1\}$ with $|A\cap I_{\eta,k}| \ge (\frac{3}{8}+\frac{3\ep}{4})|I_{\eta,k}|$. Therefore, by taking $N$ large enough, if we let\footnote{The choice of summing $n$ over $[\lfloor k \eta N \rfloor+1, \lfloor k \eta N\rfloor+Q]$ is arbitrary; any $Q$ numbers in $I_{\eta,k}$, all distinct modulo $Q$, would of course be equivalent.} $$J^A := \left\{k \in \{0,\dots,\eta^{-1}-1\} : \sum_{n=\lfloor k \eta N \rfloor+1}^{\lfloor k \eta N\rfloor +Q} w^A_{Q;\eta,k}(n) \ge \left(\frac{3}{8}+\frac{\ep}{2}\right)Q\right\},$$ then we have $|J^A| \ge \frac{\ep}{4}\eta^{-1}$. Defining $J^B$ in the analogous way, we by symmetry have $|J^B| \ge \frac{\ep}{4}\eta^{-1}$. The point is that Proposition \ref{particularintervals} (with $M = \eta N$) then lets us bound \begin{align*} \sum_{n \ge 1} (w^A_{Q;\eta}*w^B_{Q;\eta})(n)1_S(n) &= \sum_{k_1,k_2 = 0}^{\eta^{-1}-1} \sum_{n \ge 1} (w^A_{Q;\eta,k_1}*w^B_{Q;\eta,k_2})(n)1_S(n) \\ &\ge \sum_{\substack{k_1 \in J^A \\ k_2 \in J^B}} \sum_{n \ge 1} (w^A_{Q;\eta,k_1}*w^B_{Q;\eta,k_2})(n)1_S(n) \\ &\ge \sum_{\substack{k_1 \in J^A \\ k_2 \in J^B}} \frac{1}{200}c\left(\frac{\ep}{2}\right)\frac{(\eta N)^{3/2}}{\sqrt{k_1+k_2}} \\ &\ge \frac{1}{200}c\left(\frac{\ep}{2}\right)\frac{(\eta N)^{3/2}}{\sqrt{2\eta^{-1}}}|J^A|\hspace{0.5mm} |J^B|.\end{align*} The proof is complete by inserting the lower bounds $|J^A|,|J^B| \ge \frac{\ep}{4}\eta^{-1}$.
\end{proof}

\vs

We now put everything together to obtain (a more quantitative version of) our main theorem.

\vspace{1mm}

\setcounter{theorem}{0}
\begin{theorem}\label{3/8+epsilon}
For any $\ep > 0$, if $N$ is sufficiently large and $A,B \sub [N]$ have $|A|,|B| \ge (\frac{3}{8}+\ep)N$, then $A+B$ contains a perfect square. In fact, we have the quantitative $$\#\{(a,b) \in A\times B : a+b \in S\} \ge 10^{-6}\ep^3 N^{3/2}.$$
\end{theorem}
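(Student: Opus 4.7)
The plan is exactly that sketched in Remark \ref{theplan}: decompose
$$1_A \ast 1_B = w^A_{Q;\eta} \ast w^B_{Q;\eta} + f^A_{Q;\eta} \ast w^B_{Q;\eta} + w^A_{Q;\eta} \ast f^B_{Q;\eta} + f^A_{Q;\eta} \ast f^B_{Q;\eta},$$
so that $\#\{(a,b)\in A\times B : a+b \in S\} = \sum_{n\ge 1}(1_A \ast 1_B)(n) 1_S(n)$ splits into one ``main'' piece and three ``error'' pieces. Proposition \ref{waconvolutionwb} supplies the main piece directly: it is at least $\frac{\ep^2}{5000}\hs c(\ep/2) N^{3/2}$, which by Theorem \ref{modular} is of size $\gtrsim \ep^3 N^{3/2}$.

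Each of the three error terms has the form ``$1$-bounded $\ast$ balanced'' or ``balanced $\ast$ balanced'', with at least one balanced factor $f^{A}_{Q;\eta}$ or $f^B_{Q;\eta}$, and both factors $1$-bounded. Apply Proposition \ref{squarecountfourieruniform} with such a balanced function playing the role of $f$ (using commutativity of convolution to cover the case where the balanced function appears on the right); the Fourier hypothesis is then furnished by Lemma \ref{fourierapproximation} with $\delta = 2\eta$, provided $Q$ is divisible by $\lcm(1,2,\dots,\lfloor \lam^{-2}\rfloor)$. Each error term is therefore bounded in magnitude by $10(2\eta \lam^{-8} + \lam) N^{3/2}$.

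It only remains to set the parameters so that this error bound is below, say, one half of the main term. Take $\lam$ to be a small constant multiple of $\ep^3$; then choose $\eta \in \frac{1}{\N}$ with $\eta \le c\hs \ep^3 \lam^8$ for a suitably small absolute constant $c$; finally take $Q := 24 \cdot \lcm(1,\dots,\lceil \lam^{-2}\rceil)$, whose size is $\exp(O(\lam^{-2})) = \exp(\mathrm{poly}(\ep^{-1}))$ by the prime number theorem (the singly-exponential dependence promised in the introduction). Chasing constants then yields the claimed $10^{-6}\ep^3 N^{3/2}$, valid once $N$ is large enough in terms of $\ep$. The intermediate propositions do all the substantive work, so there is no serious obstacle; the only bookkeeping subtlety is arranging that the condition $q \mid Q$ in Lemma \ref{fourierapproximation} covers the major-arc range $q \le \lam^{-2}$ used by Proposition \ref{squarecountfourieruniform}, which is exactly why $Q$ is chosen to be so divisible.
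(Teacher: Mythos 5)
Your proposal is correct and follows exactly the paper's own argument: decompose $1_A*1_B$ into the four pieces, feed Proposition \ref{waconvolutionwb} the main term and Proposition \ref{squarecountfourieruniform} (with Lemma \ref{fourierapproximation} supplying $\delta=2\eta$) the three error terms, then tune $\lam,\eta,Q$. The only cosmetic difference is the parameterization: the paper picks $\ol{Q}$ first and sets $\lam=\ol{Q}^{-1/2}$, $\eta=\ol{Q}^{-9/2}$, $Q=\lcm(1,\dots,\ol{Q})$, whereas you pick $\lam\asymp\ep^3$ directly and adjoin a redundant factor $24$ to $Q$; the numerics come out the same either way.
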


\begin{proof}
Let $\eta \in \frac{1}{\N},\ol{Q} \in \N$ be parameters (based on $\ep$) to be determined, and set $Q := \lcm(1,\dots,\ol{Q})$. Take $N$ sufficiently large and $A,B \sub [N]$ with $|A|,|B| \ge (\frac{3}{8}+\ep)N$. As remarked earlier, we decompose $$1_A*1_B = w^A_{Q;\eta}*w^B_{Q;\eta}+f^A_{Q;\eta}*w^B_{Q;\eta}+w^A_{Q;\eta}*f^B_{Q;\eta}+f^A_{Q;\eta}*f^B_{Q;\eta}.$$ Proposition \ref{waconvolutionwb} gives $$\sum_{n \ge 1} (w^A_{Q;\eta}*w^B_{Q;\eta})(n)1_S(n) \ge \frac{\ep^2}{5000}c\left(\frac{\ep}{2}\right)N^{3/2},$$ and Proposition \ref{squarecountfourieruniform} together with Lemma \ref{fourierapproximation} gives $$\left|\sum_{n \ge 1} (f^A_{Q;\eta}*w^B_{Q;\eta})(n)1_S(n)\right| \le 10\left(2\eta \ol{Q}^4 + \ol{Q}^{-1/2}\right)N^{3/2},$$ and the same bound for the analogous inequalities involving $w^A_{Q;\eta}*f^B_{Q;\eta}$ and $f^A_{Q;\eta}*f^B_{Q;\eta}$. Therefore, $$\sum_{n \ge 1} (1_A*1_B)(n)1_S(n) \ge \frac{\ep^2}{5000}c\left(\frac{\ep}{2}\right) N^{3/2} - 30\left(2\eta \ol{Q}^4 + \ol{Q}^{-1/2}\right) N^{3/2}.$$ Setting $\eta = \ol{Q}^{-9/2}$ and using $c(\ep) \ge \ep/3$, we obtain $$\sum_{n \ge 1} (1_A*1_B)(n)1_S(n) \ge \left(\frac{\ep^3}{30000}-90\ol{Q}^{-1/2}\right)N^{3/2}.$$ Choosing $\ol{Q}$ a perfect square (merely so that $\eta \in \frac{1}{\N}$) with $\ol{Q}^{-1/2} \le 10^{-7}\ep^3$, say, finishes the proof. 
\end{proof}

\vspace{1.5mm}

\section{Solving the Optimization Problem}\label{optimizationsection}

We finish the paper by proving the inequality that Theorem \ref{modular} relied upon. It could be verified directly by a computer but would take quite a bit of time. 

\vs

For $a_0,\dots,a_{23} \in [0,1]$, we let $a : \Z/24\Z \to [0,1]$ be given by $a(i) = a_i$. Recall, for $a,b \in \Z/24\Z$ and $t \in \Z/24\Z$, we define $$(a*b)(t) := \frac{1}{24}\sum_{i \in \Z/24\Z} a(i)b(t-i)$$ $$f_{24}(t) := \#\{j \in \Z/24\Z : j^2 \equiv t \modd 24\}.$$

\vs

In this section, we prove the following, stated previously in Section \ref{modularsection}. 

\begin{manualtheorem}{3.6}
For any $\ep > 0$, there is some $c'(\ep) > 0$ so that the following holds. For all $a_0,\dots,a_{23},b_0,\dots,b_{23} \in [0,1]$ with $\sum_{i=0}^{23} a_i \ge 9+\ep, \sum_{i=0}^{23} b_i \ge 9+\ep$, we have $$\sum_{t \in \Z/24\Z} (a*b)(t)f_{24}(t) \ge c'(\ep)+\frac{1}{\sqrt{5}}\sqrt{\sum_i a_i-\sum_i a_i^2}\sqrt{\sum_i b_i-\sum_i b_i^2}.$$ In fact, one can take $c'(\ep) = \frac{1}{\sqrt{5}}\ep$.  
\end{manualtheorem}

\vspace{1mm}

The proof, with $c'(\ep) = \frac{1}{\sqrt{5}}\ep$, will follow from the proof of the ``$\ep=0$" case, in which we also identify the extremizers. We say $a$ is a \textit{lift-up} of a subset $A$ of $\Z/8\Z$ if: $a_i = 1$ if and only if $i \modd 8 \in A$, and $a_i = 0$ otherwise.

\vspace{1.5mm}

\begin{proposition}\label{optimization}
For all $a_0,\dots,a_{23},b_0,\dots,b_{23} \in [0,1]$ with $\sum_{i=0}^{23} a_i \ge 9, \sum_{i=0}^{23} b_i \ge 9$, we have $$\sum_{t \in \Z/24\Z} (a*b)(t)f_{24}(t) \ge \frac{1}{\sqrt{5}}\sqrt{\sum_i a_i-\sum_i a_i^2}\sqrt{\sum_i b_i-\sum_i b_i^2}$$ with equality if and only if there is some $x \in \Z/8\Z$ so that $a,b$ are lift-ups of $\{0,1,5\}+x, \{2,5,6\}-x \sub \Z/8\Z$.
\end{proposition}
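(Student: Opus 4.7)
The plan is a convexity reduction to $0/1$ inputs, followed by a combinatorial characterization of when equality can hold in the reduced problem.

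\textbf{Convexity reduction.} Fix $b$ and consider $a \mapsto F(a,b) := \sum_t (a*b)(t) f_{24}(t) - \frac{1}{\sqrt 5}\sqrt{V_A(a) V_B(b)}$, where $V_A(a) := \sum_i a_i(1 - a_i)$. The first term is linear in $a$, and $V_A$ is a strictly concave quadratic (Hessian $-2I$), so $\sqrt{V_A}$ is concave (composition of increasing concave with nonnegative concave) and $-\sqrt{V_A}$ is convex; hence $F(\cdot, b)$ is convex in $a$ and, by symmetry, $F(a, \cdot)$ is convex in $b$. A short LP argument shows every extreme point of the polytope $\{a \in [0,1]^{24} : \sum a_i \ge 9\}$ is a $0/1$ vector: an extreme point requires $24$ linearly independent tight constraints, at most one of which can be $\sum a_i \ge 9$, so at least $23$ coordinates satisfy $a_i \in \{0, 1\}$, and the remaining coordinate is then also forced into $\{0, 1\}$ by the integer sum condition. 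Iterating ``a convex function attains its minimum at an extreme point'' in each argument gives $\min_{a,b} F = \min_{(a,b) \text{ both } 0/1} F$. At any $0/1$ input $V_A = V_B = 0$, so the inequality collapses to $\sum_t (a*b)(t) f_{24}(t) \ge 0$, which is immediate. Strict convexity of $F(\cdot, b)$ when $V_B(b) > 0$ (and symmetrically), combined with the $0/1$ characterization below, then shows that any equality $F(a,b) = 0$ in fact forces $(a,b)$ to be a $0/1$ lift-up pair.

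\textbf{Combinatorial characterization.} Reduced to $0/1$ inputs, equality amounts to $(A+B) \cap Q = \emptyset$ for $Q := \{0,1,4,9,12,16\}$. Via the CRT isomorphism $\Z/24\Z \cong \Z/8\Z \times \Z/3\Z$, the set $Q$ factors as $\{0,1,4\} \times \{0,1\}$ (the product of QR mod $8$ and QR mod $3$), so the condition reads: for each $(k,r) \in A$ and $(\ell,s) \in B$, either $k + \ell \notin \{0,1,4\} \pmod 8$ or $r + s \equiv 2 \pmod 3$. Write $K_A, K_B \subset \Z/8\Z$ for the mod-$8$ projections and $R_{A,k} \subset \Z/3\Z$ for the fibers. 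The main mod-$8$ lemma, provable by short direct case analysis using Kneser's theorem on stabilizers of small sumsets in $\Z/8\Z$, is: \emph{if $K_A + K_B$ avoids $\{0,1,4\}$ and $|K_A|, |K_B| \ge 3$, then $|K_A| = |K_B| = 3$ and $(K_A, K_B)$ is a translate of $(\{0,1,5\}, \{2,5,6\})$}. If the projections of $A, B$ already satisfy $(K_A + K_B) \cap \{0,1,4\} = \emptyset$, this lemma pins down $(K_A, K_B)$ and then $|A| \le 3|K_A| = 9$ combined with $|A| \ge 9$ forces each fiber to be full, so $A, B$ are the claimed lift-ups. Otherwise some ``bad pair'' $(k, \ell)$ with $k + \ell \in \{0,1,4\}$ exists, and the constraint $R_{A,k} + R_{B,\ell} = \{2\}$ forces both fibers to be singletons. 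Writing $K_A^{\mathrm{free}} \subset K_A$ for the $k$'s whose fibers are not singleton-forced, the size bound $|A| \le |K_A| + 2|K_A^{\mathrm{free}}|$ combined with the mod-$8$ lemma applied to $(K_A^{\mathrm{free}}, K_B)$ (and symmetrically to $(K_A, K_B^{\mathrm{free}})$) yields a short finite verification that $|A| < 9$ or $|B| < 9$, a contradiction.

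\textbf{Main obstacle.} The hardest step is the combinatorial case analysis in the ``bad pair'' branch: the fiber-singleton constraints cascade through multiple $(k, \ell)$ pairs, and one must rule out every non-lift-up configuration with $|K_A| \ge 4$ or $|K_B| \ge 4$. Each individual case is a short finite check, but the full enumeration is tedious, and this is presumably what the paper refers to in saying the proposition ``could be verified by a computer but would take quite a bit of time.'' The quantitative extension $c'(\epsilon) = \epsilon/\sqrt 5$ in the companion Proposition~3.6 then follows from the $\epsilon = 0$ characterization by a first-variation argument at the lift-up extremizers, whose feasibility disappears once $\sum a_i > 9$ is enforced strictly.
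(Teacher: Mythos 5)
The convexity reduction in your first paragraph has the direction backwards. You correctly observe that $F(\cdot, b)$ is convex in $a$ (a linear term plus the convex $-\tfrac{1}{\sqrt 5}\sqrt{V_A(a)V_B(b)}$), but a convex function on a polytope attains its \emph{maximum} at an extreme point, not its minimum --- it is \emph{concave} functions that attain minima at vertices. Since you are trying to prove $F \ge 0$, you need to control where the minimum sits, and for a separately convex $F$ that minimum can be in the interior: compare $f(x) = (x-\tfrac12)^2$ on $[0,1]$, which is convex and minimized at $x=\tfrac12$. So the reduction $\min_{a,b} F = \min_{0/1} F$ does not follow, and the proof collapses at this point. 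Indeed, at $0/1$ inputs the inequality degenerates to $\sum_t (a*b)(t)f_{24}(t)\ge 0$, which is trivially true and much weaker than the claim: the entire content of the proposition is in handling fractional $a,b$, which your reduction would sidestep.

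The paper must work considerably harder for exactly this reason. It first applies AM--GM to replace $\sqrt{V_A V_B}$ by $\tfrac12(V_A + V_B)$, then homogenizes via $\tfrac29(\sum_i a_i)(\sum_i b_i) \ge \sum_i a_i + \sum_i b_i$ (valid when both sums are $\ge 9$), yielding a homogeneous quadratic-form inequality over all nonnegative $a,b$. Nazarov's argument then reduces this, via Cauchy--Schwarz and a positive-part trick, to the $\ell^2$-operator bound $\|(a*\varphi)_+\|_2 \le 2\|a\|_2$ with $\varphi = \tfrac{16}{3}\mathbbm{1}-2\sqrt5\, f_{24}$; a variational argument shows a maximizer of this ratio is an eigenvector of $a \mapsto ((a*\varphi)_+ * \wt\varphi)_+$, and the eigenvalue is controlled by the norm $N(a) = \max(9\|a\|_\infty, \|a\|_1)$, which is tuned to the extremizers. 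Convexity appears in the paper only at the very end, and in the correct direction: $a\mapsto\|(a*\varphi)_+\|_1$ is convex, so its \emph{maximum} over $\{a : \|a\|_\infty\le 1\}$ is attained at a vertex, reducing the verification of $\|(a*\varphi)_+\|_1\le 18$ to a finite computer check over $0/1$ vectors. Your CRT decomposition of the quadratic residues mod $24$ is correct and your mod-$8$ observations are in the right spirit, but they only bear on the trivialized $0/1$ problem, which is not where the difficulty lies.
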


We prove Proposition \ref{optimization} by first massaging the desired inequality into a homogeneous quadratic form. It is of course easy to check the ``if" implication of the equality part of Proposition \ref{optimization}; the ``only if" direction will follow from equality needing to hold at each step of the proof and equality holding only for the claimed extremizers at the end of the proof. 

\vs

By the arithmetic-geometric inequality, it suffices to show $$\sum_{t \in \Z/24\Z} (a*b)(t)f_{24}(t) \ge \frac{1}{2\sqrt{5}}\left(\sum_i a_i - \sum_i a_i^2 + \sum_i b_i - \sum_i b_i^2\right)$$ for all $a_i,b_i \in [0,1]$ with $\sum_i a_i, \sum_i b_i \ge 9$. Since\footnote{If $x,y \ge 9+\ep$, then $\frac{2}{9}xy \ge x+y+2\ep$, which is why $c'(\ep) := \frac{1}{\sqrt{5}}\ep$ suffices.} $\frac{2}{9}xy \ge x+y$ if $x,y \ge 9$, it suffices to show $$\sum_{t \in \Z/24\Z} (a*b)(t)f_{24}(t) \ge \frac{1}{2\sqrt{5}}\left(\frac{2}{9}(\sum_i a_i)(\sum_i b_i)-\sum_i a_i^2-\sum_i b_i^2\right)$$ for all $a_i,b_i \in [0,1]$ with $\sum_i a_i, \sum_i b_i \ge 9$. Of course it then suffices to prove the inequality for any non-negative reals $a_i,b_i$.

\vs

\begin{proposition}\label{quadraticform}
For any $a_0,b_0,\dots,a_{23},b_{23} \in [0,\infty)$ one has $$\sum_{t \in \Z/24\Z} (a*b)(t)f_{24}(t) \ge \frac{1}{2\sqrt{5}}\left(\frac{2}{9}(\sum_i a_i)(\sum_i b_i) - \sum_i a_i^2 - \sum_i b_i^2\right).$$
\end{proposition}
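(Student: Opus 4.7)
I would work on the Fourier side. Using Lemma \ref{fourierrepresentation} with $q=24$ together with Plancherel's identity $\sum_i a_i^2 = 24 \sum_m |\hat a(m)|^2$ and $\sum_i a_i = 24 \hat a(0)$, the desired inequality (divided through by $24$) becomes
\[ \sum_{m \in \Z/24\Z} \hat a(m)\hat b(m)\hat f_{24}(-m) + \frac{1}{2\sqrt{5}}\sum_m \bigl(|\hat a(m)|^2 + |\hat b(m)|^2\bigr) \;\ge\; \frac{8}{3\sqrt{5}}\hat a(0)\hat b(0). \]
The first step is to tabulate $\hat f_{24}(-m)$. Via the CRT isomorphism $\Z/24 \cong \Z/8 \times \Z/3$ the relevant quadratic Gauss sum factors as $\hat f_{24}(-m) = G_8(3m \bmod 8)\cdot G_3(2m \bmod 3)$, and a direct computation shows $|\hat f_{24}(-m)| \le 1/\sqrt{5}$ for every $m \ne 0$ \emph{except} the eight ``bad'' frequencies $m \in \{3,9,15,21\}$ (value $1/2$), $m\in\{6,18\}$ (value $1/\sqrt{2}$), and $m\in\{8,16\}$ (value $1/\sqrt{3}$). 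Crucially these bad frequencies are precisely the nontrivial characters pulled back from $\Z/8$ or from $\Z/3$.

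\textbf{Fourier decomposition.} I would then split $a = a^{(0)} + a^{(1)} + a^{(2)} + a^{(3)}$ (and similarly $b$) as Fourier-orthogonal parts with supports $\{0\}$, $\{3,6,9,12,15,18,21\}$, $\{8,16\}$, and the remaining $14$ ``mixed'' frequencies, respectively. Because these four supports are pairwise disjoint, both the bilinear form $\sum_t(a*b)(t)f_{24}(t)$ and the $\ell^2$-norms $\|a\|^2,\|b\|^2$ split cleanly across the four components, while the target $\frac{2}{9}(\sum a)(\sum b)$ depends only on the constant components $a^{(0)}, b^{(0)}$. The mixed component is then immediate: since $|\hat f_{24}(-m)| \le 1/\sqrt{5}$ on its support, Cauchy--Schwarz and AM--GM give its contribution to LHS$-$RHS is $\ge 0$. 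The constant component, using $\bar a^2+\bar b^2 \ge 2\bar a\bar b$ (with $\bar a := \hat a(0)$), contributes a \emph{surplus} of $24(1 - \sqrt{5}/3)\bar a \bar b > 0$ to LHS$-$RHS; this is the slack that must dominate the deficits arising in the remaining two components.

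\textbf{The main obstacle} is the mod-$3$ and mod-$8$ components, where $|\hat f_{24}(-m)| > 1/\sqrt{5}$ and naive AM--GM fails. Here one must exploit the pointwise nonnegativity of $a,b$: the mod-$3$ projection $\bar a + a^{(2)}$ is nonnegative on $\Z/3$, forcing $\hat a^{(2)}(8)$ to lie in the triangle in $\C$ with vertices $\bar a, \bar a\omega, \bar a\omega^2$ (and analogously $\hat a^{(1)}$ is constrained by $\bar a + a^{(1)} \ge 0$ on $\Z/8$). The mod-$3$ piece has only a handful of real parameters and can be verified by a direct check on the triangle; the worst-case (vertex) deficit turns out to be bounded by $(1 - 2/\sqrt{5})\cdot 24\bar a \bar b$, comfortably inside the constant-part surplus. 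The mod-$8$ piece is the heart of the argument: invoking the lift-up identity $(a*b)_{24}(t) = (\bar a *_8 \bar b)(t \bmod 8)$ for functions constant on mod-$8$ classes, together with the factorization $f_{24} = f_8 \otimes f_3$, it reduces to a $16$-variable quadratic inequality on $\Z/8$ of exactly the form of Proposition \ref{quadraticform}. This is where the sharp constant $1/\sqrt{5}$ emerges and the extremizers (lift-ups of $\{0,1,5\}$ and $\{2,5,6\} \subset \Z/8$) are pinned down. I expect to close it by a further bad/good Fourier split on $\Z/8$ — where $|G_8(\alpha)|$ takes only the values $0, 1/2, 1/\sqrt{2}, 1$, so only the two frequencies $\alpha \in \{2,6\}$ remain uncontrolled by AM--GM — and then a finite case analysis on the extreme configurations of the constrained nonnegative functions, which by homogeneity and linearity in each variable must be supported on at most a few residues of $\Z/8$.
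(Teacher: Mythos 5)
Your Fourier-side setup is correct — the Plancherel identity, the catalogue of bad frequencies ($m\in\{3,9,15,21\}$ with $|\wh{f}_{24}|=1/2$, $m\in\{6,18\}$ with $1/\sqrt2$, $m\in\{8,16\}$ with $1/\sqrt3$), the AM--GM treatment of the mixed block, the triangle constraint on $\wh{a}(8)$, the constant surplus $(1-\sqrt5/3)\wh{a}(0)\wh{b}(0)$, and the mod-3 vertex deficit $(1-2/\sqrt5)\wh{a}(0)\wh{b}(0)$. But the budget does not close, and the reason is structural, not a missing calculation. At the extremizers (lift-ups from $\Z/8$ of $\{0,1,5\}$ and $\{2,5,6\}$) the mod-3 and mixed Fourier coefficients vanish, so the mod-8 deficit alone equals the \emph{entire} constant surplus. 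Therefore any plan that first reserves $(1-2/\sqrt5)\wh{a}(0)\wh{b}(0)\approx 0.106\,\wh{a}(0)\wh{b}(0)$ of the surplus for the mod-3 block and then asks the remaining $\approx 0.149\,\wh{a}(0)\wh{b}(0)$ to dominate the mod-8 block must fail: the mod-8 block already needs $\approx 0.255\,\wh{a}(0)\wh{b}(0)$. Equivalently, the statement your decomposition would establish — that (constant) $+$ (mod-3) $+$ (mod-8) $\ge 0$ under only the two marginal-nonnegativity constraints — is false. A concrete witness: on $\Z/24$ take $a=3\cdot 1_{\{0,9,21\}}$ and $b=3\cdot 1_{\{2,5,14\}}$. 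Both are nonnegative, $a$ has mod-8 marginal $1_{\{0,1,5\}}$ and mod-3 marginal at a vertex of the triangle, and likewise for $b$; the constant, mod-3, and mod-8 contributions sum to a negative number, and the inequality for Proposition~\ref{quadraticform} is rescued only by a large positive mixed contribution. Using just the marginal constraints discards exactly the coupling (pointwise nonnegativity on the full group) that forces a compensating mixed surplus whenever the mod-3 and mod-8 marginals are both pushed to extremes, so the split into three independent sub-problems cannot work.

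The paper's proof (due to Nazarov) is genuinely different. It rewrites the target as $\|(a*\varphi)_+\|_2 \le 2\|a\|_2$ for $a\ge 0$, with $\varphi = \frac{16}{3}\mathbbm{1}-2\sqrt5 f_{24}$; shows any nonnegative maximizer satisfies $\big((\wh{a}*\varphi)_+*\wt\varphi\big)_+ = \lambda\wh{a}$; and kills $\lambda\le 4$ by exhibiting a norm $N(a)=\max(9\|a\|_\infty,\|a\|_1)$ with $N((a*\varphi)_+)\le 2N(a)$ and the dual bound. The $\ell^1$ part is a convex maximum, so it is checked on $\{0,1\}$-valued $a$ by computer. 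Note that the positive-part truncation $(\cdot)_+$ is exactly where the joint nonnegativity of $a$ is exploited, which is the ingredient your marginal split loses.
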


\vs

We will present a proof of Proposition \ref{quadraticform} due to Fedor Nazarov. The (quite ingenious) proof significantly reduces the computational power needed.

\begin{proof} \text{}

\vs

\un{Step 1}: Reduction to a norm inequality in a single (non-negative) variable.

\vs

\noindent Using that $$\sum_{t \in \Z/24\Z} (a*b)(t)f_{24}(t) = \sum_{t \in \Z/24\Z} (\wt{a}*f_{24})(t)b(t),$$ where $\wt{a}(i) := a(-i)$ and $$\left(\sum_i a_i\right)\left(\sum_i b_i\right) = 24\sum_{t \in \Z/24\Z} (\wt{a}*\mathbbm{1})(t)b(t),$$ where $\mathbbm{1} : \Z/24\Z \to [0,1]$ is the constant function $\equiv 1$, we wish to prove $$\sum_{t \in \Z/24\Z} \left(\wt{a}*(\frac{16}{3}\mathbbm{1}-2\sqrt{5}f_{24})\right)\hspace{-1mm}(t)\hspace{1mm} b(t) \le \sum_{t \in \Z/24\Z} \left[a(t)^2+b(t)^2\right].$$ We may, of course, ignore the distinction between $a$ and $\wt{a}$, so we drop the \hs $\wt{\text{}}$ \hs from here on\footnote{However, the reader should keep in mind that we are ``mirroring" the extremizers.}. Since $2xy \le x^2+y^2$ for all $x,y \in \R$, it suffices to show $$\sum_{t \in \Z/24\Z} \left(a*(\frac{16}{3}\mathbbm{1}-2\sqrt{5}f_{24})\right)\hspace{-1mm}(t)\hspace{1mm} b(t) \le 2\left(\sum_{t \in \Z/24\Z} a(t)^2\right)^{1/2}\left(\sum_{t \in \Z/24\Z} b(t)^2\right)^{1/2},$$ which we write more compactly as $$\langle a*\varphi,b \rangle \le 2\|a\|_2\|b\|_2,$$ with $\varphi := \frac{16}{3}\mathbbm{1}-2\sqrt{5}f_{24}$. Since $b(t) \ge 0$ for each $t$, it suffices to prove $$\Big\langle (a*\varphi)_+,b\Big\rangle \le 2\|a\|_2\|b\|_2.$$ By Cauchy-Schwarz, it then suffices to prove $$\left \| (a*\varphi)_+ \right \|_2 \le 2\|a\|_2$$ for each $a : \Z/24\Z \to [0,\infty)$.

\vsss

\un{Step 2}: Showing the maximizer is an eigenvector of a related operator.

\vs

\noindent By compactness, let $a = \wh{a}$ be a maximizer of $\|(a*\varphi)_+\|_2$ subject to $\|a\|_2 = 1$ and $a \ge 0$ (pointwise). Let $\wh{\sigma} : \Z/24\Z \to \R$ satisfy $|\wh{\sigma}(t)| < \wh{a}(t)$ whenever $\wh{a}(t) > 0$ (think $\wh{\sigma} \to 0$). Then \begin{align*} \left\|\left((\wh{a}+\wh{\sigma})*\varphi\right)_+\right\|_2^2 - \left\|(\wh{a}*\varphi)_+\right\|_2^2 &= \sum_{t \in \Z/24\Z} \Bigg[\Big((\wh{a}*\varphi)(t)+(\wh{\sigma}*\varphi)(t)\Big)_+^2-\Big((\wh{a}*\varphi)(t)\Big)_+^2\Bigg] \\ &= 2\sum_{t \in \Z/24\Z} \Big((\wh{a}*\varphi)(t)\Big)_+(\wh{\sigma}*\varphi)(t)+O\left(\|\wh{\sigma}\|^2\right) \\ &= 2\Big\langle (\wh{a}*\varphi)_+ \hs , \hspace{0.5mm} \wh{\sigma}*\varphi\Big\rangle + O\left(\|\wh{\sigma}\|^2\right) \\ &= 2\Big\langle (\wh{a}*\varphi)_+*\wt{\varphi}\hs , \hspace{0.5mm}\wh{\sigma}\Big\rangle + O\left(\|\wh{\sigma}\|^2\right),\end{align*} where the second equality used the fact that $(x+y)_+^2-x_+^2 = 2yx_++O(y^2)$ for any reals $x,y$ with $|y| < |x|$, and in the last equality, we again use the notation $\wt{\varphi}(\cdot) := \varphi(-\cdot)$. Let $\wh{v} : \Z/24\Z \to \R$ be $\wh{v} := (\wh{a}*\varphi)_+*\wt{\varphi}$ so that \vspace{1mm} $$\left\|\left((\wh{a}+\wh{\sigma})*\varphi\right)_+\right\|_2^2 - \left\|(\wh{a}*\varphi)_+\right\|_2^2 = 2\langle \wh{v},\wh{\sigma}\rangle + O\left(\|\wh{\sigma}\|^2\right). \vspace{1mm}$$ We see that no $t \in \Z/24\Z$ can satisfy $\wh{a}(t) = 0$ and $\wh{v}(t) > 0$, for otherwise we could let $\wh{\sigma}(t) = +\al$ for some (very) small $\al > 0$, $\wh{\sigma}(t') = -\delta$ for some $t'$ with $\wh{a}(t') > 0$ and appropriate $\delta > 0$ (which will be $O(\al^2)$), and $\wh{\sigma}=0$ elsewhere, to have $$||\wh{a}+\wh{\sigma}||_2 = 1 \hspace{3mm} \text{ and } \hspace{3mm} \left\|\left((\wh{a}+\wh{\sigma})*\varphi\right)_+\right\| > \left\|\left(\wh{a}*\varphi\right)_+\right\|,$$ contradicting the maximality of $\wh{a}$. And similarly no $t \in \Z/24\Z$ can satisfy $\wh{a}(t) > 0$ and $\wh{v}(t) \le 0$. Therefore, $\wh{v}_+$ is positive exactly when $\wh{a}$ is, and each are $0$ otherwise. This implies $$\wh{v}_+ \equiv \lambda \wh{a}$$ for some $\lambda > 0$, for otherwise one could make $2\langle \wh{v},\wh{\sigma}\rangle+O(\|\wh{\sigma}\|^2)$ negative for suitable small $\wh{\sigma}$, contradicting the maximality of $\wh{a}$. To end this step, quickly note \begin{align}\label{equalslambda} \left\|(\wh{a}*\varphi)_+\right\|_2^2 &= \Big \langle (\wh{a}*\varphi)_+,(\wh{a}*\varphi)_+\Big\rangle \\ \nonumber &= \Big \langle (\wh{a}*\varphi)_+, \wh{a}*\varphi \Big \rangle \\ \nonumber &= \langle \wh{v}, \wh{a} \rangle \\ \nonumber &= \langle \wh{v}_+, \wh{a}\rangle \\ \nonumber &= \lambda.\end{align}

\vsss

\un{Step 3}: Choosing a convenient norm.

\vs

\noindent We are given $\wh{a} : \Z/24\Z \to [0,\infty)$ satisfying $$\left((\wh{a}*\varphi)_+*\wt{\varphi}\right)_+ \equiv \lam \wh{a}$$ and, by \eqref{equalslambda}, we wish to show $\lam \le 4$. It suffices to find a function (``norm") $N : [0,\infty)^{\Z/24\Z} \to [0,\infty)$ satisfying the multiplicativity condition \begin{equation}\label{mult}N(\gamma a) = \gamma N(a)\end{equation} for all $\gamma \in [0,\infty)$ and $a : \Z/24\Z \to [0,\infty)$, and the two (dual) norm bounds \begin{equation}\label{n1} N\left((a*\varphi)_+\right) \le 2N(a)\end{equation} \begin{equation}\label{n2} N\left((a*\wt{\varphi})_+\right) \le 2N(a)\end{equation} for all $a : \Z/24\Z \to [0,\infty)$. Indeed, with such a norm $N$, we have $$\lam N(\wh{a}) = N(\lam \wh{a}) = N\Big(\left((\wh{a}*\varphi)_+*\wt{\varphi}\right)_+\Big) \le 2N\Big((\wh{a}*\varphi)_+\Big) \le 4N(\wh{a}).$$ Motivated by the (conjectured) extremizers, we use the norm $$N(a) := \max\left(9\|a\|_\infty,\|a\|_1\right).$$

\vs

\un{Step 4}: Showing the desired norm bounds.

\vs

\noindent It is clear that $N$ satisfies condition \eqref{mult}. To prove \eqref{n1}, we may normalize to $N(a)=9$ so that it suffices to show $$\left\{\begin{aligned} \|a\|_\infty \le 1 \\ \|a\|_1 \le 9\end{aligned}\hs\hs\right\} \implies \left\{\begin{aligned}\|(a*\varphi)_+\|_\infty \le 2 \\ \|(a*\varphi)_+\|_1 \le 18\end{aligned}\hs\hs\right\},$$ where, to recall, $$\varphi = \frac{16}{3}\mathbbm{1}-2\sqrt{5}f_{24}.$$ So take $a : \Z/24\Z \to [0,\infty)$ with $\|a\|_\infty \le 1$ and $\|a\|_1 \le 9$. Then we easily have $$\|(a*\varphi)_+\|_\infty \le \max_{t \in \Z/24\Z} \frac{1}{24}\sum_{j \in \Z/24\Z} a(j)\varphi(t-j) \le \frac{1}{24}\cdot \frac{16}{3}\cdot 9 = 2.$$ As $a \mapsto \|(a*\varphi)_+\|_1$ is convex, it simply suffices to check that $\|(a*\varphi)_+\|_1 \le 18$ for all $a \in \{0,1\}^{24} \sub [0,1]^{\Z/24\Z}$. We may assume WLOG that $a_0 = 1$, so that there are only $\sum_{k=0}^8 {23 \choose k} < 10^6$ cases to check, which is easily handled by a computer.

\vs

We do everything analogous to establish \eqref{n2} as well.

\vs

\noindent Below is the python code, presented in two columns to save space. 

\vs

\begin{Verbatim}[fontsize=\tiny]
import math
import itertools

f = []
for t in range(0,24):
    sum1 = 0
    for j in range(0,24):
        if ((j*j)%24 == t):
            sum1 = sum1+1
    f.append(sum1)
phi = []
for t in range(0,24):
    phi.append(16/3-2*math.sqrt(5)*f[t])
phit = []
for t in range(0,24):
    phit.append(phi[23-t])

def h(a,psi):
    sum1 = 0
    for t in range(0,24):
        sum2 = 0
        for j in range(0,24):
            sum2=sum2+a[j]*psi[(t-j)%24]
        sum2 = sum2/24
        sum2 = max(sum2,0)
        sum1 = sum1+sum2
    return sum1
\end{Verbatim}

\vspace{-90mm}
\begin{Verbatim}[fontsize=\tiny,xleftmargin=65mm]
c = []
for j in range(1,24):
    c.append(j)
max1 = 0
max2 = 0
for k in range(0,9):
    for A in itertools.combinations(c,k):
        A = list(A)
        A.insert(0,0)
        a = []
        for j in range(0,24):
            if (j in A):
                a.append(1)
            else:
                a.append(0)
        v1 = h(a,phi)
        v2 = h(a,phit)
        max1 = max(max1,v1)
        max2 = max(max2,v2)
        if (v1 >= 17.99):
            print ("extremizer - "+str(a))
        if (v2 >= 17.99):
            print ("extremizer for dual - "+str(a))
print (max1)
print (max2)
\end{Verbatim}

\vs

The output of the python code is as follows.

\vs

\begin{Verbatim}[fontsize=\tiny]
extremizer - [1, 1, 0, 0, 1, 0, 0, 0, 1, 1, 0, 0, 1, 0, 0, 0, 1, 1, 0, 0, 1, 0, 0, 0]
extremizer for dual - [1, 1, 0, 0, 0, 1, 0, 0, 1, 1, 0, 0, 0, 1, 0, 0, 1, 1, 0, 0, 0, 1, 0, 0]
extremizer for dual - [1, 0, 0, 1, 1, 0, 0, 0, 1, 0, 0, 1, 1, 0, 0, 0, 1, 0, 0, 1, 1, 0, 0, 0]
extremizer - [1, 0, 0, 1, 0, 0, 0, 1, 1, 0, 0, 1, 0, 0, 0, 1, 1, 0, 0, 1, 0, 0, 0, 1]
extremizer - [1, 0, 0, 0, 1, 1, 0, 0, 1, 0, 0, 0, 1, 1, 0, 0, 1, 0, 0, 0, 1, 1, 0, 0]
extremizer for dual - [1, 0, 0, 0, 1, 0, 0, 1, 1, 0, 0, 0, 1, 0, 0, 1, 1, 0, 0, 0, 1, 0, 0, 1]
18.000000000000004
18.000000000000004
\end{Verbatim}

\vs

Since we printed all $a$ for which $\|(a*\varphi)_+\|_1,\|(a*\wt{\varphi})_+|_1 \ge 17.99$ and the ones printed have $\|(a*\varphi)_+\|_1, \|(a*\wt{\varphi})_+\|_1 = 18$, the $+4\cdot 10^{-15}$ (added to $18$) is merely a computer-induced rounding error. 

\vs

We finish by analyzing the extremizers. We obtained only $3$ of the $8$ conjectured extremizers; however, we assumed WLOG that $a_0 = 1$. Translating the outputted extremizers indeed recovers all $8$ conjectured extremizers for $a$. Since such $a$ have $\sum_i a_i - \sum_i a_i^2 = 0$, the only extremizing $b$, for a given $a$, must satisfy $\sum_t (a*b)(t)f_{24}(t) = 0$, i.e., $a+b$ ``contains" no squares. Since all extremizers $a$ are translates of one another, we may focus on a particular extremizer $a$. Then, as is easily checked, $b$ is uniquely determined merely by ``process of elimination".   
\end{proof}

\vspace{1.5mm}

\section{Acknowledgments} 

I would like to thank my advisor Ben Green for suggesting this problem to me and Fedor Nazarov for nearly solving the optimization problem by hand.

\vs

\end{document}